\author{Mara Ungureanu}
\title{Geometry of intersections of some secant varieties to algebraic curves}
\date{}
\noindent\begin{footnotesize}\textsc{Albert-Ludwigs-Universit\"at Freiburg, Mathematisches Institut, Abteilung Reine Mathematik, Ernst-Zermelo-Str 1, 79104 Freiburg}
\newcommand{\id}{\text{Id}}
\newcommand{\grd}{g^r_d}
\newcommand{\grdop}[2]{g^{#1}_{#2}}
\DeclareMathOperator{\im}{\text{im}}
\DeclareMathOperator{\rk}{rk}
\DeclareMathOperator{\oo}{\mathcal{O}}
\DeclareMathOperator{\p}{\mathbb{P}}
\DeclareMathOperator{\pic}{\text{Pic}}
\DeclareMathOperator{\ord}{\text{ord}}
\newcommand{\Grd}{G^r_d(C)}
\newcommand{\Wrd}{W^r_d(C)}
\newcommand{\Crd}{C^r_d}
\newtheorem{thm}{Theorem}[section] 
\newtheorem{prop}[thm]{Proposition}
\theoremstyle{definition}
\theoremstyle{definition}
\theoremstyle{remark}
\newtheorem{rem}{Remark}[section]
\begin{document}

\maketitle

\begin{abstract}
For a smooth projective curve, the cycles of subordinate or, more generally, secant divisors to a given linear series are among some of the most studied objects in classical enumerative geometry.  We consider the intersection of two such cycles corresponding to secant divisors of two different linear series on the same curve and investigate the validity of the enumerative formulas counting the number of divisors in the intersection.  We study some interesting cases, with unexpected transversality properties, and establish a general method to verify when this intersection is empty.
\end{abstract}

\section{Introduction}
The study of the ways in which an algebraic curve can be mapped into projective space has been a fruitful avenue of research for algebraic geometers since the 19th century.  Not only does it allow for a better understanding of the relationship between intrinsic and extrinsic properties of projective curves, but it has also led to important results pertaining to the birational geometry of the moduli space of abstract curves and it has found modern incarnations in the subject of stable maps and its applications.

Moreover, the subject provides a rich source of enumerative questions.  One of the most basic such problems is to determine the number of singularities that occur on the image curve $f(C)$ under a mapping $f:C\rightarrow\mathbb{P}^r$ where $C$ is a smooth algebraic curve.  
An elementary example thereof is the calculation of the number of double points on the image curve $f(C)$  under a birational map to the quadric surface $\p^1 \times \p^1$.  Assuming the curve $C$ has arithmetic genus $g$ and bidegree $(d_1,d_2)$, the adjunction formula tells us that there are exactly
\begin{equation}\label{eq:adjunction}
\nu=(d_1-1)(d_2-1)-g
\end{equation} 
ordinary double points.

This problem can be reformulated from the point of view of a class computation for \textit{intersections of incidence varieties} as follows: the map 
\[ C\rightarrow\p^1\times\p^1 \]
is given by a pair of pencils $l_1$ and $l_2$ of degree $d_1$ and $d_2$, respectively, on $C$ and the double points correspond to pairs of points $(p_1,p_2)$ common to both linear series, i.e.~an effective divisor $D=p_1+p_2$ on the curve $C$ such that
\[\dim (l_1 - D)\geq 0 \text{ and } \dim (l_2-D)\geq 0.\]
In other words, as we shall make precise below, the divisor $D$ belongs to the \textit{incidence varieties} of both pencils $l_1$ and $l_2$ and
the enumerative problem becomes thus the problem of counting the number of divisors in the intersection of these two incidence varieties.
One may therefore obtain the count by simply computing the fundamental class of the intersection.  This computational approach has the advantage of being immediately generalisable to maps given by linear series of any degree and dimension, but as long as the geometry of the intersection is not known (i.e. whether it is smooth, reduced, of expected dimension, etc.), the result of the enumerative calculation may not be considered meaningful. 

The purpose of this paper is to study the geometry of such intersections of incidence (or more generally secant) varieties to algebraic curves, with a focus on issues of transversality of intersection.

\bigskip

In order to state the precise results, we introduce some terminology.
Let $C$ be a general curve of genus $g$ equipped with a linear series $l$ of effective divisors of degree $d$ and dimension $r$.  Such a linear series is called a $\grd$.  Let $e\leq d$ be a positive integer and denote by $C_e$ the $e$-th symmetric product of the curve.  We set  
\[ \Gamma_e(l):=\{ D\in C_e \mid D'-D\geq 0\text{ for some }D'\in l \}\subset C_e\]
or equivalently
\[ \Gamma_e(l):=\{ D\in C_e \mid \dim(l-D)\geq 0\}\subset C_e\]
to be the \textit{incidence variety} of all effective divisors of degree $e$ that are subordinate to the linear series $l$.

As a subspace of $C_e$, the space $\Gamma_e(l)$ has the structure of a degeneracy locus so it is indeed a variety and it is easy to see that it has expected dimension $r$ for an arbitrary linear series $l$ of type $\grd$.  We explain this in more detail in the course of the paper (see Section \ref{sec:prelimsec}).

Suppose the divisor $D$ belongs to an intersection $\Gamma_e(l_1)\cap\Gamma_e(l_2)$ of incidence varieties corresponding to two different linear series $l_1$, $l_2$ on the same curve $C$.  This of course imposes a stronger condition on $D$ inside $\Gamma_e(l_1)$ (or $\Gamma_e(l_2)$) than the one from the definition of incidence varieties.  This stronger condition, depending on the geometric situation, may for example give a higher bound on the dimension of the linear series $l_1-D$ (or $l_2-D$), which in turn means that $D$ should belong to a certain subspace of $\Gamma_e(l_1)$ (or $\Gamma_e(l_2)$).  One way to keep track of this is by means of a generalisation of the notion of incidence varieties, namely that of \textit{secant varieties}: if $C$ is a general curve of genus $g$ endowed with a linear series $l$ of type $\grd$ and if $e$ and $f$ are positive integers such that $0\leq f <e\leq d$, then let 
\[V_e^{e-f}(l)=\{D\in C_e \mid \dim (l-D)\geq r-e+f \}\subset C_e\] 
be the \textit{secant variety} of effective divisors of degree $e$ which impose at most $e-f$ independent conditions on $l$.  
Equivalently, this space parametrises the $e$-secant $(e-f-1)$-planes to the curve $C$ embedded in $\p^r$ via $l$.  It is easy to see that incidence varieties are special cases of secant varieties, namely $\Gamma_e(l)=V_e^r(l)$ and $f=e-r$ and moreover that $V^{e-f}_e(l)\subset\Gamma_e(l)$.  

The cycle $V_e^{e-f}(l)$ of $C_e$ is also endowed with a degeneracy locus structure (so it is an actual variety) and it has expected dimension
\[ \exp\dim V_e^{e-f}(l)=e-f(r+1-e+f). \]
 It was proven by Farkas \cite{Fa2} that, if non-empty, its dimension is the expected one 
 for a general curve $C$ with a general series $l$ of type $\grd$.
 
 Unlike incidence varieties, the general secant varieties have a more complicated geometry.  This is illustrated by the fact that existence results for the variety $V_e^{e-f}(l)$ when 
 \[ e-f(r+1-e+f)\geq 0 \]
 are only known for some possible values of the parameters $g,r,d,e,f$: $V^e_{e-f}(l)\neq\emptyset$ for every curve $C$ of genus $g$ and $l=\grd$ such that $d\geq 2e-1$ and $e-f(r+1-e+f)\geq r-e+f$ (cf.~Theorem 1.2 of \cite{CM}); or for every $l=\grd$ with $g-d+r\leq 1$ if and only if $\rho(g,r-e+f,d-e)\geq 0$ (cf.~\cite{ACGH} pg. 356).

\bigskip

Furthermore, understanding of the geometry of secant varieties and their intersections is worthwile because they are interesting objects not only from the point of view of classical algebraic geometry, but also from a modern perspective.  For example, one may generalise the notion of secant varieties to nonsingular projective surfaces $S$ with a line bundle $L$.  If $|L|$ is a linear system of dimension $3m-2$ inducing a map $S\rightarrow\p^{3m-2}$, then the number of $m$-chords of dimension $m-2$ to the image of $S$ (i.e.~the cardinality of the secant variety $V^{m-1}_m(|L|)$) is given by the integral of the top Segre class
\[ \int_{S^{[m]}} s_{2m}(H^{[m]}), \]
where $S^{[m]}$ is the Hilbert scheme of points of $S$ carrying a tautological rank-$m$ bundle
$H^{[m]}$.  Such Segre classes play a basic role in the Donaldson-Thomas counting of sheaves and appeared first in the algebraic study of Donaldson invariants via the moduli space of rank-2 bundles on $S$ \cite{T}.  The exact result of the integral is the subject of Lehn's conjecture \cite{L} that states that it can be expressed as a polynomial of degree $m$ in the four variables
\[ H^2,\  H\cdot K_S,\  K_S^2,\  c_2(S). \]
For a proof of this conjecture, see \cite{Ti} and for a generalisation in the case of K3 surfaces see \cite{MOP}.

\bigskip

To come back to our motivating enumerative problem, consider the following setup: equip the general curve $C$ of genus $g$ with two complete linear series $l_1=\grdop{r_1}{d_1}$ and $l_2=\grdop{r_2}{d_2}$ and let 
\begin{align*}
&\Gamma_e(l_1)=\{ D\in C_e \mid \dim(l_1-D)\geq 0 \},\\
&\Gamma_e(l_2)=\{ D\in C_e \mid \dim(l_2-D)\geq 0 \},
\end{align*}
be the respective incidence varieties of dimensions $r_1$ and $r_2$, respectively.  As mentioned earlier, we are interested in counting the number of points in the intersection $\Gamma_e(l_1)\cap\Gamma_e(l_2)$, in the cases when we expect this space to consist of a finite number of points, i.e.~when
\[\dim\Gamma_e(l_1)+\dim\Gamma_e(l_2)=r_1+r_2=e.\]
In fact, in Chapter VIII, \S 3 of \cite{ACGH}, a class computation shows that in this case, the number is expected to be the coefficient of the monomial $t_1^{e-r_1} t_2^{e-r_2}$ in 
\begin{equation}\label{eq:countincidencegeneral}
(1+t_1)^{d_1-g-r_1} (1+t_2)^{d_2-g-r_2} (1+t_1+t_2)^g. 
\end{equation} 
Using this formula we immediately recover the number of double points of the image curve $f(C)$ of genus $g$ and bidegree $(d_1,d_2)$ under a birational map $f$ to the quadric surface $\p^1 \times\p^1$. Indeed, in this case  $r_1=r_2=1$ and $e=2$.  Thus, according to formula (\ref{eq:countincidencegeneral}), the number we are after is the coefficient of $t_1 t_2$ in
\[  (1+t_1)^{d_1-g-1}(1+t_2)^{d_2-g-1}(1+t_1+t_2)^g, \]
which is exactly $(d_1-1)(d_2-2)-g$, i.e.~the same count obtained by geometric methods in (\ref{eq:adjunction}).

While in this case the class computation gives the correct number of points, as verified by the geometric argument using the adjunction formula,  (\ref{eq:countincidencegeneral}) unfortunately also yields unexpected zero counts in some other cases.  
A priori it is not at all clear whether these zero counts correspond to an empty intersection of incidence varieties or to the degenerate situation of a positive-dimensional intersection.  Our main contribution in this context is the clarification of this issue in some interesting cases.
 More precisely, we prove the following
\begin{thm}\label{thm:incidence}
	Consider a general curve $C$ of genus $g$ equipped with arbitrary complete linear series $l_1=\grdop{r_1}{d_1}$, $l_2=\grdop{r_2}{d_2}=K_C-l_1$ with non-negative Brill-Noether numbers and so that $r_1,r_2\geq 0$, and let $e\leq \min{(d_1,d_2)}$ be a positive integer.  
		 If $e=r_1+r_2$, $d_2=r_1 + 2$ (or equivalently $r_2=1$), and the Brill-Noether number $\rho(g,r_1,d_1)$ vanishes, then the intersection $\Gamma_e(l_1)\cap\Gamma_e(l_2)$ is empty.	
\end{thm}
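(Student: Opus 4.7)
The strategy is to translate the non-emptiness of the intersection into a cohomological condition on the divisor class $L_1 - L_2$ (writing $L_i$ for the line bundle associated to $l_i$), and then to rule out that condition by combining the Gieseker--Petri theorem with the base-point-free pencil trick.

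First I would unwind the definitions for a hypothetical $D \in \Gamma_e(l_1) \cap \Gamma_e(l_2)$. Since $l_2$ is a pencil of degree $d_2 = e + 1$ and is basepoint-free on a general curve (being the gonality pencil, as $\rho(g,1,d_2) = 0$), the condition $D \in \Gamma_e(l_2)$ forces $D + p \sim L_2$ for some $p \in C$; the condition $D \in \Gamma_e(l_1)$ reads $h^0(L_1 - D) \geq 1$, i.e.\ $h^0(L_1 - L_2 + p) \geq 1$. Because $|L_2|$ basepoint-free implies $|2L_2|$ basepoint-free, one has $h^0(2L_2 - p) = h^0(2L_2) - 1$ for every $p \in C$; combined with Riemann--Roch and Serre duality this yields
\[ h^0(L_1 - L_2 + p) = h^0(2L_2) - 3 = h^0(L_1 - L_2), \]
independent of $p$. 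Thus $\Gamma_e(l_1) \cap \Gamma_e(l_2) \neq \emptyset$ if and only if the class $L_1 - L_2$, of degree $2r_1 - 2$, is effective.

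Assume for contradiction that $L_1 \sim L_2 + F$ for some effective divisor $F$ of degree $2r_1 - 2$, and let $s_F \in H^0(\oo_C(F))$ be the corresponding section. Multiplication by $s_F$ embeds $H^0(L_2)$ into $H^0(L_1)$ as a 2-dimensional subspace $V$. I would then consider the Petri map
\[ \mu_0 \colon H^0(L_1) \otimes H^0(L_2) \longrightarrow H^0(K_C), \]
which is injective by the Gieseker--Petri theorem, available because $C$ is general. Its restriction to $V \otimes H^0(L_2)$ factors as the multiplication $H^0(L_2) \otimes H^0(L_2) \to H^0(2L_2)$ followed by the injection $\cdot s_F \colon H^0(2L_2) \hookrightarrow H^0(K_C)$; the kernel of this restriction is therefore isomorphic to the kernel of $H^0(L_2) \otimes H^0(L_2) \to H^0(2L_2)$, which the base-point-free pencil trick identifies with $H^0(\oo_C) \cong \C$ and so is one-dimensional. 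This contradicts the injectivity of $\mu_0$, so $L_1 - L_2$ cannot be effective and the intersection is empty.

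The main obstacle is the cohomological reformulation in the first step: the observation that $h^0(L_1 - L_2 + p)$ is independent of $p$, thanks to basepoint-freeness of $|2L_2|$, collapses an a priori one-parameter family of conditions to a single cohomological inequality. Once this is in place, the contradiction is forced by the interplay of Petri (injectivity) and the pencil trick (one-dimensional kernel). The degenerate case $r_1 = 1$ is handled uniformly: there $F$ would have degree zero, the hypothesis becomes $L_1 \sim L_2$, and the same Petri-versus-pencil-trick contradiction applies, reflecting the fact that on a general genus $4$ curve $L_2$ cannot be a theta characteristic with $h^0(L_2) = 2$.
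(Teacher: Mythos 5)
Your proof is correct and follows essentially the same route as the paper's: both arguments reduce the statement to the vanishing $h^0(K_C-2L_2)=0$, obtained from the Gieseker--Petri theorem via the base-point-free pencil trick applied to the pencil $|L_2|$, and both use base-point-freeness of $|2L_2|$ to pass from the secancy condition (which a priori involves the auxiliary point $p$ with $D+p\sim L_2$) to that single cohomological condition. The only difference is bookkeeping: the paper locates the contradiction in the equality $\dim|2D+E_2|=\dim|2D+2E_2|$ violating base-point-freeness, whereas you locate it in the non-effectivity of $L_1-L_2=K_C-2L_2$.
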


\medskip

The discussion above shows that another sensible direction is to consider directly the intersection of an incidence variety and a secant variety on a smooth general curve $C$, namely
\[ \Gamma_e(l_1) \cap V^{e-f}_e(l_2),\]
where $l_1=\grdop{r_1}{d_1}$ and $l_2=\grdop{r_2}{d_2}$ are linear series on $C$ and $e$ and $f$ are integers such that $0\leq f<e\leq \min{(d_1,d_2)}$.
We investigate the expected emptiness of the intersection when the sum of the dimensions of the two varieties $\Gamma_e(l_1)$ and $V^{e-f}_e(l_2)$ inside $C_e$ is less than $e$.  As in Theorem \ref{thm:incidence} we again focus on the case $l_2=K_C-l_1$.
Our main result in this context is:
\begin{thm}\label{thm:secant}
	Let $C$ be a general curve of genus $g$ equipped with an arbitrary complete linear series $l_1=\grdop{r_1}{d_1}$ such that $\rho(g,r_1,d_1)\geq 0$ and $\dim (K_C-l_1)\geq 0$.  If $f=1$ and
\begin{equation}\label{eq:a}
\dim \Gamma_e(l_1) +\exp \dim V_e^{e-f}(K_C-l_1)\leq e-\rho(g,r_1,d_1)-1,
\end{equation}  
then the intersection $\Gamma_e(l_1)\cap V_e^{e-f}(K_C-l_1)$ is empty.
\end{thm}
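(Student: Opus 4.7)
My plan is to transform the secant condition via Serre duality into a Brill--Noether membership condition for a line bundle of higher degree, set up a universal parameter space as $l_1$ varies in $W^{r_1}_{d_1}(C)$, and then bound the dimension of this parameter space using hypothesis (\ref{eq:a}) to force the intersection to be empty.

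First I would verify the key reformulation. Applying Riemann--Roch to $K_C - l_1 - D$ and invoking Serre duality, the defining inequality $h^0(K_C - l_1 - D) \geq r_2 + 2 - e$ of $V^{e-1}_e(K_C - l_1)$ is equivalent to
$$h^0(l_1 + D) \geq r_1 + 2,$$
i.e.\ to $l_1 + D \in W^{r_1+1}_{d_1+e}(C)$. Combined with the condition $h^0(l_1 - D) \geq 1$ defining $\Gamma_e(l_1)$, a divisor $D$ lies in $\Gamma_e(l_1) \cap V^{e-1}_e(K_C - l_1)$ if and only if
$$l_1 - D \in W^{0}_{d_1-e}(C) \quad \text{and} \quad l_1 + D \in W^{r_1+1}_{d_1+e}(C).$$

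I would then introduce the universal incidence locus
$$\mathcal{J} := \bigl\{ (L, D) \in W^{r_1}_{d_1}(C) \times C_e \bigm| h^0(L - D) \geq 1,\ h^0(L + D) \geq r_1 + 2 \bigr\}.$$
Since the fiber of the first projection over $l_1$ is exactly the intersection in question, the theorem is equivalent to the assertion $\mathcal{J} = \emptyset$. Because $C$ is general, the Brill--Noether theorem gives $\dim W^{r_1}_{d_1}(C) = \rho(g, r_1, d_1)$, and the result of Farkas recalled in the introduction ensures that $V^{e-1}_e(K_C - l_1)$ has the expected dimension whenever it is non-empty. A fiberwise dimension count along the first projection then yields
$$\dim \mathcal{J} \leq \rho(g, r_1, d_1) + \bigl( \dim \Gamma_e(l_1) + \exp\dim V^{e-1}_e(K_C - l_1) - e \bigr) \leq -1$$
by hypothesis (\ref{eq:a}), forcing $\mathcal{J} = \emptyset$ and hence the emptiness of every fiber.

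The main obstacle is that the fiberwise bound above implicitly requires the intersection $\Gamma_e(l_1) \cap V^{e-1}_e(K_C - l_1) \subset C_e$ to have the expected codimension, which the Farkas theorem only guarantees for a \emph{generic} linear series, not for an arbitrary $l_1$ as allowed by the theorem. To obtain an unconditional bound, I would instead project $\mathcal{J}$ onto $C_e$: for each fixed $D \in C_e$, the fiber is contained in the triple intersection
$$W^{r_1}_{d_1}(C) \cap \bigl( D + W^{0}_{d_1-e}(C) \bigr) \cap \bigl( W^{r_1+1}_{d_1+e}(C) - D \bigr) \subset \pic^{d_1}(C),$$
and bounding the dimension of this intersection via the Brill--Noether theorem on the general curve $C$, together with the minimality of the Brill--Noether jump $r_1 \to r_1 + 1$ forced by the choice $f = 1$, should again yield $\dim \mathcal{J} < 0$. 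The assumption $f = 1$ is essential throughout: it reduces the analysis to a codimension-one condition on $l_1 + D$, keeping the estimation tractable, whereas for $f \geq 2$ the relevant locus $W^{r_1+f}_{d_1+e}(C)$ sits much deeper inside the Jacobian and the dimension count would require substantially more refined input.
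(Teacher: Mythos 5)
Your Serre-duality reformulation is correct and is exactly the paper's own starting point: for $f=1$ the condition $D\in V^{e-1}_e(K_C-l_1)$ is equivalent to $h^0(l_1+D)\geq r_1+2$, so the problem becomes the non-existence of pairs $(D,E)$ with $|D+E|=l_1$ and $|2D+E|=\grdop{r_1+1}{d_1+e}$. The gap is everything after that. Your first dimension count is precisely the \emph{expected}-dimension heuristic that the paper records in the Remark opening Section \ref{sec:proofincidsecinter}; it is the statement to be proved, not a proof, and you rightly flag that it presupposes the intersection has expected codimension for an \emph{arbitrary} $l_1$. But your proposed repair has the same defect one level up: there is no transversality theorem bounding the dimension of
\[
W^{r_1}_{d_1}(C)\cap\bigl(D+W^0_{d_1-e}(C)\bigr)\cap\bigl(W^{r_1+1}_{d_1+e}(C)-D\bigr)\subset\pic^{d_1}(C)
\]
by the sum of the expected codimensions. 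The Brill--Noether and Gieseker--Petri theorems control each locus $W^r_d(C)$ individually for general $C$, but say nothing about intersections of \emph{translates} of such loci by a varying divisor class $D$; translated intersection problems of this kind are exactly the ones that behave unexpectedly. Indeed the paper's own Remark \ref{rem:interesting} exhibits a secant variety that is empty despite having non-negative expected dimension, i.e.\ a translated Brill--Noether intersection that violates the naive count in the opposite direction. ``Minimality of the Brill--Noether jump $r_1\to r_1+1$'' is not a usable input here, and no standard result supplies the missing bound.

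The paper's actual proof is of a completely different nature: it degenerates $C$ to a flag curve $\widetilde{R}$, uses Proposition 2.2 of \cite{Fa2} to force the $d_1$ points of the limit of $D+E$ onto a connected subcurve $Y$ with $p_a(Y)=d_1$ meeting its complement in a single point $p$, and then plays the Pl\"ucker formula for the limit linear series $l_1$ and $\tilde{l}=|2D+E|$ on the two sides of $p$ against the compatibility conditions on vanishing sequences, the fact that the vanishing sequence of $l_1$ at $p$ is a subsequence of that of $\tilde{l}$, and the constraint (Lemma 5.2 of \cite{Un}) that the sequences on the $Y$-side begin with $0$. The contradiction is an explicit numerical incompatibility among these ramification bounds, not a dimension count in the Jacobian. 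To complete your argument you would need to supply that degeneration machinery or a genuine substitute for it; as written, the proposal does not close.
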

Note that if equality holds in (\ref{eq:a}) and $f=r_1+1+\rho(g,r_1,d_1)$, then
$\dim(K_C-l_1)-e+f=0$ and it follows that $V_e^{e-f}(K_C-l_1)=\Gamma_e(K_C-l_1)$. We discuss the geometric interpretation of this case  when $l_1$ is a pencil in Section \ref{sec:pencil}, Part I.

We prove Theorem \ref{thm:secant} by degeneration to a nodal curve using limit linear series and by exploiting an ingenious construction of Farkas (\cite{Fa2}). 
Furthermore, we provide in fact a method to check the emptiness of such intersections for any $f$, but the case $f=1$ seems to be the one with the most tractable computations.

In the course of the proof of Theorem \ref{thm:secant} we also find an interesting example that contradicts the expectation of non-emptiness of secant varieties $V_e^{e-f}(l)$, where $l=\grd$, when the expected dimension
\[\exp\dim V_e^{e-f}(l) = e-f(r+1-e+f) \geq 0. \]
 We explain this in Remark \ref{rem:interesting} of Section \ref{sec:proofincidsecinter}.

\bigskip

The paper is organised as follows: in Section \ref{sec:prelimsec} we establish some preliminary results on incidence and secant varieties and we describe their tangent space in Section \ref{sec:tangentspace}.  We prove Theorem \ref{thm:incidence} in Section \ref{sec:interincid}.  We then construct degenerations of secant varieties for families of curves with nodal fibres of compact type using limit linear series in Section \ref{sec:degenerationssecant} and we use them to prove Theorem \ref{thm:secant} in Section \ref{sec:proofincidsecinter}.

\section*{Acknowledgements}
This paper is part of my PhD thesis.  I would like to thank my advisor Gavril Farkas for his helpful suggestions.  I am also grateful to the referee for pointing out several inaccuracies in a previous version of this article and for their comments that lead to a significant improvement of the manuscript.

\section{Preliminaries on incidence and secant varieties}\label{sec:prelimsec}
We begin by fixing the notation.
Let $C$ be a smooth curve of genus $g$ and denote by $C_d$ its $d$-th symmetric product.  Furthermore, let $\Grd$ parametrise linear series of type $\grd$, i.e.
\[\Grd:=\{ l=(L,V) \mid L\in\pic^d(C), V\in G(r+1,H^0(C,L)) \}.\]  
Furthermore, given a divisor $D\in C_d$, denote by $|D|$ the complete linear system of effective divisors linearly equivalent to $D$.   Let $\Crd$ be the subvariety of $C_d$ parametrising effective divisors of degree $d$ on $C$ moving in a linear series of dimension at least $r$:  
\[ \Crd := \{ D\in C_d \mid \dim | D | \geq r \}, \]
and $\Wrd$ be the associated variety of complete linear series of degree $d$ and dimension at least $r$, i.e.
\[ \Wrd := \{ L\in\pic^d(C) \mid h^0(C,L)\geq r+1 \}\subseteq\pic^d(C).\]
We focus on Brill-Noether general curves, meaning that $\Grd$ is a smooth variety and its dimension is given by the Brill-Noether number
\begin{equation}\label{eq:brillnoether} 
\rho(g,r,d)=g-(r+1)(g-d+r)\geq 0. 
\end{equation}

\bigskip

Let $e,f$ be integers such that $0\leq f<e\leq d$.
As mentioned in the Introduction, incidence varieties are special cases of secant varieties $V^{e-f}_{e}(l)$ with $r-e+f=0$, namely $\Gamma_e(l)=V_e^r(l)$.

 Secant (and therefore incidence) varieties $V^{e-f}_e(l)$ of effective divisors of degree $e$ imposing at most $e-f$ conditions on $l$ have a degeneracy locus structure inside the symmetric product $C_e$, obtained as follows: let $\mathcal{E}=\oo_{C_e}\otimes V$ be the trivial vector bundle of rank $r+1$ on $C_e$ and
$\mathcal{F}_e(L):=\tau_*(\sigma^*L\otimes\oo_{\mathcal{U}})$ be the $e$-th secant bundle, where $\mathcal{U}$ is the universal divisor 
\[ \mathcal{U}=\{ (p,D)\mid D\in C_e \text{ and }p\in D\}\subset C\times C_e, \]
and $\sigma$, $\tau$ are the usual projections:
 \begin{figure}[H]\centering
  \begin{tikzpicture}
    \matrix (m) [matrix of math nodes,row sep=2em,column sep=1em,minimum width=1em]
  {
     & C \times C_e & \supset \mathcal{U} \\
     C & & C_e\\};
  \path[-stealth]
    (m-1-2) edge node [auto,swap] {$\sigma$} (m-2-1)
            edge node [auto]{$\tau$}  (m-2-3);
 \end{tikzpicture}
 \end{figure}
Let $\Phi:\mathcal{E}\rightarrow\mathcal{F}$ be the bundle morphism obtained by pushing down to $C_e$ the restriction $\sigma^*L	\rightarrow\sigma^*L\otimes\oo_{\mathcal{U}}$.  The space $V^{e-f}_e(l)$ is then the $(e-f)$-th degeneracy locus of $\Phi$, i.e.~where $\rk\Phi\leq e-f$. To see that this is indeed the case, note that fibrewise, the morphism $\Phi$ is given by the restriction:
\[ \Phi_D:H^0(C,L)\rightarrow H^0(C,L/L(-D)). \] 
 Now by definition, $D\in V^{e-f}_e(l)$ if and only if $\dim\ker\Phi_D=h^0(L-D)\geq r+1-e+f$, which is equivalent to the aforementioned condition $\rk\Phi\leq e-f$. 
The dimension estimate for $V^{e-f}_e(l)$ follows immediately from its degeneracy locus structure:
\[ \dim V^{e-f}_e(l)\geq e-(r+1-e+f)(e-e+f)=e-f(r+1-e+f). \]
In particular, 
\[ \dim\Gamma_e(l) \geq r. \]
On the other hand, since $D\in\Gamma_e(l)$ is equivalent to there existing a divisor $E\in l$ such that $E-D\geq 0$, and since the dimension of the locus of such divisors $E$ inside $l$ is at most $r$, we immediately have that
\[ \dim \Gamma_e(l)=r\]
for any linear series $l$ of type $\grd$ on $C$.
Using the Porteous formula, one obtains (see \cite{ACGH} Chapter VIII, Lemma 3.2) that the fundamental class of $\Gamma_e(l)$ is given by
\[ \gamma_e(l)=\sum_{j=0}^{e-r}\binom{d-g-r}{j}\frac{x^k \theta^{e-r-j}}{(e-r-j)!}, \]
where $\theta$ is the pullback of the fundamental class of the theta divisor to $C_e$ and $x$ is the class of the divisor $q+C_{e-1}\subset C_e$.  

To obtain formula (\ref{eq:countincidencegeneral}) giving the number (when expected to be finite) of divisors in the intersection
\[ \Gamma_e(l_1)\cap\Gamma_e(l_2), \]
where $l_1=\grdop{r_1}{d_1}$ and $l_2=\grdop{r_2}{d_2}$, one may compute the product
\[ \gamma_e(l_1)\gamma_e(l_2)\in H^{2e}(C_e,\mathbb{Z})\simeq\mathbb{Z}, \]
which, as shown in \cite{ACGH} Chapter VIII, pg. 343, yields the desired count.

Unfortunately, the situation is not so simple in the general case of secant varieties with $r-e+f>0$.  Indeed, the fundamental class of $V^{e-f}_e(l)$ has been computed by MacDonald and its expression is very complicated and thus of limited practical use, as can be seen in \cite{ACGH}, Chapter VIII, \S 4.
For a study of the dimension theory of secant varieties we refer the reader to \cite{Fa2}.
 
 In this paper we are concerned instead with the study of intersections of incidence and secant varieties on a given general smooth curve and with the geometric interpretation of some unexpected enumerative results that arise in this context.

\section{Infinitesimal study of secant varieties}\label{sec:tangentspace}
This section is dedicated to the infinitesimal study of secant varieties. More precisely, given a complete linear series $l$ we compute the tangent space of $V^{e-f}_e(l)$ at a point $D\in V^{e-f}_e(l)\setminus V^{e-f-1}_e(l)$ whose support consists of distinct points.  Using this we then write down a transversality condition for the intersection $\Gamma_e(l_1)\cap\Gamma_e(l_2)$ for two complete linear series $l_1$ and $l_2$.

In Section \ref{sec:prelimsec} we expressed $V^{e-f}_e(l)$ as a degeneracy locus of a map of vector bundles over $C_e$.  We find the tangent space by using a local description of such loci, arguing in the same spirit as in the tangent space computation for the variety $\Crd$ (see Lemma 1.5 in Chapter IV of \cite{ACGH}).

Let $l=(L,V)\in\Grd$ be a complete linear series and $D=\sum_{i=1}^n a_i p_i \in C_e$ where the points $p_i$ are distinct and the $a_i$ are positive integers such that $\sum_{i=1}^n a_i=e$. Choose a basis $\{s_0,\ldots,s_r\}$ of the vector space $H^0(C,L)$.    
Then $D\in V_e^{e-f}(l)$ if and only if the matrix below has rank at most $e-f$:
\[\phi(D):= \left(\begin{array}{ccc}
s_{01}(0) & \cdots & s_{r1}(0)\\
s'_{01}(0)&\cdots & s'_{r1}(0)\\
\vdots & & \vdots\\
\frac{1}{(a_1-1)!}s_{01}^{(a_1-1)}(0) & \cdots & \frac{1}{(a_1-1)!}s_{r1}^{(a_1-1)}(0)\\
s_{02}(0) & \cdots & s_{r2}(0)\\
s'_{02}(0)&\cdots & s'_{r2}(0)\\
\vdots & & \vdots\\
\frac{1}{(a_2-1)!}s_{02}^{(a_2-1)}(0) & \cdots & \frac{1}{(a_2-1)!}s_{r2}^{(a_2-1)}(0)\\
\vdots & & \vdots
\end{array}\right), \]
where $s_{ji}$ denotes the section $s_j$ in a local coordinate system centred around the point $p_i$, with $j\in\{0,\ldots,r\}$ and $i\in\{1,\ldots,n\}$.

Let $M$ denote the variety of $e\times(r+1)$ matrices.  We then interpret $\phi$ as an $M$-valued mapping defined in a neighbourhood of $D$.  Hence, by definition, $V_e^{e-f}(l)$ is in this neighbourhood the pullback via $\phi$ of the determinantal subvariety $M_{e-f}\subset M$ of all those matrices whose rank does not exceed $e-f$.  
Moreover, the Zariski tangent space to $V_e^{e-f}(l)$ at $D$ is the pullback of the tangent space to $M_{e-f}$ at $A=\phi(D)$ under the differential of $\phi$.  From Chapter 2, \S 2 of \cite{ACGH} we have that, if $A\notin M_{e-f-1}$ then:
\[ T_A (M_{e-f}) = \{ B\in M \mid B\cdot \ker A \subset A\cdot \mathbb{C}^{r+1} \}. \]
Since we are interested in the case $D\in V^{e-f}_e(l)\setminus V^{e-f-1}_e(l)$, the condition $A\notin M_{e-f-1}$ is satisfied.  Thus, for $A=\phi(D)$,
\begin{align*}
T_D(V_e^{e-f}(l)) &= \phi_*^{-1}(T_A(M_{e-f})) \\
&= \{ v\in T_D(C_e) \mid \phi_*(v)\cdot\ker A\subset \im A \}.
\end{align*}  
Recall that the tangent space $T_D C_e = H^0(C,\oo_D(D))$ and notice that by identifying the vector space $\mathbb{C}^{r+1}$ on which $A$ acts with $H^0(C,L)$ we see that $A$ is the matrix representing the restriction map
\[ \alpha_l : H^0(C,L)\rightarrow H^0(C,L\otimes \oo_D). \]
Hence we may write $\phi_*(v)\cdot \ker A$ as the image of $v\otimes H^0(C,L-D)$ under the cup-product homomorphism
\[ \beta_l: H^0(C,\oo_D(D))\otimes H^0(C,L-D) \rightarrow H^0(C,L\otimes \oo_D). \]
Thus the condition $\phi_*(v)\cdot\ker A\subset \im A$ is equivalent to
\[ \forall s\in H^0(C,L-D), \exists s'\in H^0(C,L)\colon \beta_l(v\otimes s)=\alpha_l(s'). \]
Denote by $\delta_l$ the coboundary mapping 
\[  H^0(C,L\otimes\oo_D)\rightarrow H^1(C,L-D) \]
and let 
\begin{align*}
\beta'_l:H^0(\oo_D(D))\otimes H^0(C,L-D)&\rightarrow H^1(C,L-D)\\
v\otimes s &\mapsto \delta_l (\beta_l(v\otimes s)).
\end{align*}
Thus we see that $\beta_l(v\otimes s)\in \im (\alpha_l)$ if and only if $\beta'_l(v\otimes s)=0$ for all $s\in H^0(C,L-D)$.  We can express this condition using the Serre duality pairing $\langle\cdot,\cdot\rangle$ as follows:
\[ \langle \sigma,\beta'_l(v\otimes s) \rangle=0, \forall \sigma\in H^0(C,K_C-L+D), s\in H^0(C,L-D). \]
Consider now the following commutative square:
 \begin{figure}[H]\centering
  \begin{tikzpicture}
    \matrix (m) [matrix of math nodes,row sep=2em,column sep=1em,minimum width=1em]
  {
     H^0(C,\oo_D(D))\otimes H^0(C,L-D)& H^0(C,L\otimes\oo_D) \\
     H^1(C,\oo_C)\otimes H^0(C,L-D) & H^1(C,L-D)\\};
  \path[-stealth]
    (m-1-1) edge node [auto] {$\beta_l$} (m-1-2)
            edge node [auto]{$\delta\otimes\id$}  (m-2-1)
     (m-1-2) edge node [auto] {$\delta_l$} (m-2-2)
     (m-2-1) edge node [auto] {$\beta''_l$} (m-2-2);
 \end{tikzpicture}
 \end{figure}
\noindent where the map $\beta''_l$ is the cup-product mapping and $\delta:H^0(C,\oo_D(D))\rightarrow H^1(C,\oo_C)$ is the coboundary map with dual given by the restriction map 
\[ \alpha:H^0(C,K_C)\rightarrow H^0(C,K_C\otimes\oo_D). \]
Then we have
\begin{align*}
\langle \sigma,\beta'_l(v\otimes s) \rangle &= \langle \sigma,\beta''_l(\delta v \otimes s) \rangle \\
&=\langle \mu_l(\sigma\otimes s),\delta v \rangle\\
&=\langle \alpha\mu_l(\sigma\otimes s),v \rangle,
\end{align*}
where $\mu_l$ is the cup-product map
\[\mu_l: H^0(C,K_C-L+D) \otimes H^0(C,L-D) \rightarrow H^0(C,K_C). \] 
To sum up, $v\in T_D(V_e^{e-f}(l))$ if and only if $v\in \im (\alpha\mu_l)^{\perp}$, where the superscript $^\perp$ denotes orthogonality with respect to the natural pairing given by the residue between $H^0(C,\oo_D(D))$ and $T_D(C_e)^{\vee}=H^0(C,K_C\otimes \oo_D)$.

Therefore we found that $T_D(V_e^{e-f})(l) = \im (\alpha\mu_l)^{\perp}$, for $l$ complete and $D\notin V^{e-f-1}_e(l)$.

Recall that $\Gamma_e(l)=V_e^r(l)$.  Hence the intersection $\Gamma_e(l_1)\cap\Gamma_e(l_2)$, where $l_1\in G_{d_1}^{r_1}(C)$ and $l_2\in G_{d_2}^{r_2}(C)$ are complete linear series, is transverse at $D=\sum_{i=1}^n a_i p_i \in C_e$, with distinct points $p_i$, if
\[ \im(\alpha\mu_{l_1})^{\perp} + \im(\alpha\mu_{l_2})^{\perp} = T_D C_e, \]
or equivalently
\[ \im(\alpha\mu_{l_1})\cap\im(\alpha\mu_{l_2})=0. \]
In what follows we exhibit some classes of examples for which this transversality condition cannot hold.

\section{Intersections of incidence varieties - proof of Theorem \ref{thm:incidence}}\label{sec:interincid}
In this section we investigate the failure of transversality for intersections of incidence varieties in certain interesting cases and in doing so we prove Theorem \ref{thm:incidence}.

Recall that for two linear series $l_1=\grdop{r_1}{d_1}$ and $l_2=\grdop{r_2}{d_2}$ on a general curve $C$ and for the positive integer $e=r_1+r_2$, we expect there to be a finite number of divisors in the intersection $\Gamma_e(l_1)\cap\Gamma_e(l_2)$ and this number is given by formula (\ref{eq:countincidencegeneral}).

Consider the linear series $l_1=\grdop{r_1}{d_1}$, the pencil $l_2=\grdop{1}{d_2}$, and $e=r_1+1$.  Formula (\ref{eq:countincidencegeneral}) gives that the number of divisors $D\in C_{r_1 +1}$ common to both $l_1$ and $l_2$ is
\begin{equation}\label{eq:countincidence}
(d_1-r_1)\binom{d_2 - 1}{r_1} - g \binom{d_2-2}{r_1-1}.
\end{equation}  
This number was first computed by Severi in the context of the theory of correspondences and coincidences on curves (see Section 74 of \cite{SL}).

From our point of view, this choice of parameters provides an interesting example of a zero count when $d_2=r_1+2$ and $\rho(g,r_1,d_1)=0$, because now
\[ (d_1-r_1)\binom{d_2 - 1}{r_1} - g \binom{d_2-2}{r_1-1} = \rho(g,r_1,d_1)=0. \]
Thus we expect this intersection not to be well-behaved in the case of vanishing $\rho(g,r_1,d_1)$.  

\bigskip

We now clarify when the zero-count comes from an empty intersection or from a degenerate positive-dimensional intersection.  Note that since $\rho(g,r_1,d_1)=0$, it immediately follows that:
\begin{align*}
d_1&=r_1(s_1+1),\\
g&=s_1(r_1+1),
\end{align*}
where $s_1:=g-d_1+r_1$ be the index of speciality of the linear series $l_1$.  Moreover, since the curve $C$ is general, the Brill-Noether number corresponding to the pencil $l_2$ 
\[ \rho(g,1,r_1+2)=s_1(r_1+1)-2(s_1-1)(r_1+1)=(r_1+1)(2-s_1) \]
must be non-negative.  This is only possible if $s_1=1$ or $s_1=2$.  If $s_1=1$, then $l_1=K_C$ and we have the following

\begin{prop}\label{prop:zerocount}
Let $C$ be a general curve of genus $g$, $K_C$ its canonical linear series and $l_2=\grdop{1}{d_2}$.  If $d_2=r_1+2$, then there are two possibilities for the intersection $\Gamma_e(l_1)\cap\Gamma_e(l_2)$:
\begin{enumerate}[label=(\roman*), wide, labelwidth=!, labelindent=0pt]
	\item it is empty if $l_1=K_C$ and $l_2$ is base point free;
	\item it is strictly positive-dimensional if $l_1=K_C$ and $l_2$ is not base point free.
\end{enumerate}
\end{prop}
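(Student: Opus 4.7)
The plan is to translate both incidence conditions into transparent cohomological statements and then read off the conclusion in each case.

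First I would apply Riemann--Roch to a degree-$g$ divisor $D$: one has $h^0(D) = h^0(K_C-D)+1$, so the defining condition $h^0(K_C-D)\geq 1$ of $\Gamma_g(K_C)$ is equivalent to $h^0(D)\geq 2$. Thus $\Gamma_g(K_C) = C^1_g$ is precisely the locus of special degree-$g$ divisors. Writing $l_2=|L_2|$ with $L_2\in\pic^{g+1}(C)$, the definition of $\Gamma_g(l_2)$ says that $D\in\Gamma_g(l_2)$ if and only if $D\in|L_2(-q)|$ for some $q\in C$. The cohomology of such a twist is controlled by the exact sequence
\[ 0 \longrightarrow L_2(-q) \longrightarrow L_2 \longrightarrow L_2|_q \longrightarrow 0; \]
since $h^0(L_2)=2$, one has $h^0(L_2(-q))=1$ when $q$ is not a base point of $l_2$, and $h^0(L_2(-q))=2$ when $q$ is a base point.

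For (i), assume $l_2$ is base point free. Then for any $D\in\Gamma_g(l_2)$ and any $q\in C$ with $D+q\in|L_2|$, the point $q$ is not a base point, so $h^0(D)=h^0(L_2(-q))=1$. Hence no $D\in\Gamma_g(l_2)$ is special, so $D\notin\Gamma_g(K_C)$, and the intersection is empty.

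For (ii), let $q_0$ be a base point of $l_2$ and set $L':=L_2(-q_0)$, a line bundle of degree $g$ with $h^0(L')=2$. The complete linear system $|L'|\simeq\mathbb{P}^1$ embeds as a projective line in $C_g$. Every $D\in|L'|$ satisfies $h^0(D)\geq 2$, placing it in $\Gamma_g(K_C)$, and also $D+q_0\in|L_2|$, placing it in $\Gamma_g(l_2)$. This exhibits a $\mathbb{P}^1$ inside the intersection, so the intersection is strictly positive-dimensional; since $\dim\Gamma_g(l_2)=r_2=1$ by Section \ref{sec:prelimsec}, it is in fact equal to this $\mathbb{P}^1$. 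There is no significant obstacle: everything reduces to the Riemann--Roch identification $\Gamma_g(K_C)=C^1_g$ together with the elementary analysis of $h^0(L_2(-q))$ for a pencil.
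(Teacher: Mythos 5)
Your proof is correct and follows essentially the same route as the paper's: Riemann--Roch identifies $\Gamma_g(K_C)$ with the locus of degree-$g$ divisors moving in a pencil, and the dichotomy is then read off from whether $h^0(L_2(-q))$ drops, i.e.\ from the base points of the (complete) pencil $l_2$. The only caveat is your closing aside that the intersection \emph{equals} the exhibited $\mathbb{P}^1$: this does not follow merely from $\dim\Gamma_g(l_2)=1$ (and if $l_2$ had several base points the intersection would be a union of such pencils), but the proposition only asserts strict positive-dimensionality, which you have established.
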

\begin{proof}
Let $D\in\Gamma_{r_1+1}(l_1)\cap\Gamma_{r_1+1}(l_2)$.  Then $K_C-D\geq 0$ for all $D\in C_{r_1+1}=C_g$ satisfying $g-(r_1+1)+\dim |D|=\dim |D|>0$.  Hence $D\in\Gamma_{g}(K_C)$ if and only if  $|D|=\grdop{1}{g}$.  If $l_2$ is base point free, then the intersection $\Gamma_g(K_C)\cap\Gamma_g(l_2)$ is empty.  Otherwise, the intersection $\Gamma_g(K_C)\cap\Gamma_g(l_2)$ is at least 1-dimensional, hence not a finite, discrete set.
\end{proof}

\bigskip

If $s_1=2$, then $l_1=\grdop{r_1}{3r_1}$ and $l_2=\grdop{1}{r_1+2}$.  In particular, this is the case when $l_1=\grdop{r_1}{3r_1}=K_C-l_2$ and we find ourselves in the situation of Theorem \ref{thm:incidence} i), which we now prove.  
\begin{proof}[of Theorem \ref{thm:incidence} i)]
Assume there exists an effective divisor $D\in\Gamma_{r_1+1}(l_1)\cap\Gamma_{r_1+1}(l_2)$.
Hence there exists an effective divisor $E_1$ of degree $2r_1-1$ such that
\[|D+E_1|=l_1\] and an effective divisor $E_2$ of degree 1 such that
\[ |D+E_2|=l_2. \]
Since the Brill-Noether number of $l_2$ vanishes, $l_2$ is a general linear series and hence base-point free.  It is also complete, as $(2r_1+2)-(r_1+2)+1=r_1+1>0$.  Using the base-point-free pencil trick on the cup product mapping
\[ \mu_0: H^0(C,D+E_2)\otimes H^0(C,K_C-D-E_2)\rightarrow H^0(C, K_C) \]
and the fact that $C$ is a general curve we get that $h^0(K_C-2D-2E_2)=0$.  Combining this with Riemann-Roch, we obtain that $\dim|2D+2E_2|=2$.

Next, $0\leq \dim|E_1|=\dim|l_1-D|=\dim|K_C-l_2-D|=\dim |K_C-2D-E_2|$ which we plug into Riemann-Roch to get that
\[ \dim|2D+E_2| = h^0(K_C-2D-E_2)+1\geq 2. \]
We conclude that $\dim|2D+E_2|=2$.  Moreover, the linear system $|2D+2E_2|$ is base-point free and since $E_2$ is a point, this implies that $\dim |2D+2E_2|=3$, which is a contradiction.  Hence the intersection $\Gamma_{r_1+1}(l_1)\cap\Gamma_{r_1+1}(l_2)$ is empty in this case.
\end{proof}

\section{Degenerations of secant varieties}\label{sec:degenerationssecant}
In this section we construct a space of degenerations of secant varieties for families of curves of compact type using limit linear series and the idea of degeneracy loci.  

Before doing so, we recall some well-known facts about limit linear series.  Consider a family $\pi:\mathscr{X}\rightarrow B$ of curves of genus $g$ together with a section $\sigma:B\rightarrow\mathscr{X}$ such that $B=\text{Spec}(R)$ for some discrete valuation ring $R$ with uniformising parameter $t$.
Assume further that $\mathscr{X}$ is a nonsingular surface, projective over $B$.  Let $0\in B$ denote the point corresponding to the maximal ideal of $R$ and $\eta$, $\bar{\eta}$ the generic and geometric generic point of $B$, respectively.  Moreover, let the special fibre $\mathscr{X}_0$ be a reduced curve of compact type, while $\mathscr{X}_{\bar{\eta}}$ is assumed to be a smooth, irreducible curve of the same genus.  
 
Let $(\mathscr{L}_{\bar{\eta}},\mathscr{V}_{\bar{\eta}})$ be a $\grd$ on $\mathscr{X}_{\bar{\eta}}$.  We now briefly explain how this gives rise to a limit series on $\mathscr{X}_0$, after possibly replacing nodes of $\mathscr{X}_0$ by smooth rational curves via base change.  For details, see \cite{EH86}.  After the base change we may assume that $(\mathscr{L}_{\bar{\eta}},\mathscr{V}_{\bar{\eta}})$ comes from a series $(\mathscr{L}_{\eta},\mathscr{V}_{\eta})$ of type $\grd$ on $\mathscr{X}_{\eta}$.  This then determines a $\grd$ on each irreducible component $Y$ of $\mathscr{X}_0$ as follows: since $\mathscr{X}$ is smooth, $\mathscr{L}_{\eta}$ extends to a line bundle on $\mathscr{X}$ unique up to tensoring with a line bundle of the form $\oo_{\mathscr{X}}(\mathcal{C})$, where $\mathcal{C}$ is supported on $\mathscr{X}_0$.  There exists therefore an extension $\mathscr{L}_Y$ to $\mathscr{L}_{\eta}$, unique up to isomorphism, such that $\deg(\mathscr{L}_Y|_Y)=d$ and for any irreducible component $Z$ of $\mathscr{X}_0$ with $Z\neq Y$, $\deg(\mathscr{L}_Y|_{Z})=0$.  We set $\mathscr{V}_Y=(\mathscr{V}_{\eta}\cap\pi_{*}\mathscr{L}_Y)\otimes k(0)$ and it follows that
 \[ \mathscr{V}_Y \simeq \pi_{*}\mathscr{L}_Y\otimes k(0)\subseteq H^0(\mathscr{L}_Y|_{\mathscr{X}_0}) \]
is a vector space of dimension $r+1$ which we will moreover identify with its image in $H^0(\mathscr{L}_Y|_Y)$.  Hence the pair $(\mathscr{L}_Y|_Y,\mathscr{V}_Y)$ is a $\grd$ on $Y$, which is called the $Y$-\textit{aspect} of $(\mathscr{L}_{\eta},\mathscr{V}_{\eta})$.  The collection of aspects
\[ l = \{ (\mathscr{L}_Y|_Y,\mathscr{V}_Y) \mid Y \text{ component of }\mathscr{X}_0 \} \]
is called the \textit{limit} of $(\mathscr{L}_{\eta},\mathscr{V}_{\eta})$.

Now, for a curve $X$ of compact type a  \textit{crude limit linear series} $l$ is 
\[ l:=\{ l_Y \text{ a }\grd\text{ on }Y\mid Y \text{ component of }X\} \]
together with a \textit{compatibility condition} on the \textit{vanishing sequence} at the point $p\in Y$
\[ 0 \leq a_0(l_Y,p) < a_1(l_Y,p) < \cdots < a_r(l_Y,p)\leq d, \]
where the $a_i(l_Y,p)$ are the orders with which non-zero sections of $l_Y$ vanish at $p$.   The compatibility condition is: if $Z$ is another component of $\mathscr{X}_0$ with $Y\cap Z = p$, then for all $i=0,\ldots,r$,
\begin{equation}\label{eq:vanish}
 a_i(l_Y,p) + a_{r-i}(l_Z,p) \geq d. 
\end{equation}
It follows (for a proof, see \cite{EH86}) that the limit of $(\mathscr{L}_{\eta},\mathscr{V}_{\eta})$ from above is a crude limit linear series.  Note that if we have equality in (\ref{eq:vanish}), then we have a \textit{refined limit linear series}.  In general we omit the adjectives ``crude'' or ``refined'' unless necessary.

Recall also the definition of the \textit{ramification sequence} at the point $p\in Y$:
\[ 0 \leq \alpha_0(l_Y,p) < \alpha_1(l_Y,p) < \cdots < \alpha_r(l_Y,p)\leq d-r,  \]
where $\alpha_i(l_Y,p)=a_i(l_Y,p)-i$.

The Plücker formula for refined limit linear series (cf.~Proposition 1.1 of \cite{EH86}) states the following: if $X$ is a genus $g$ curve of compact type and $l$ is a limit linear series of type $\grd$ on $X$, then
	\begin{equation}\label{thm:pluckerlls}
	\sum_{q\text{ smooth point of }X}\biggl(\sum_{i=0}^r \alpha^l_i(q) \biggr)=(r+1)d+\binom{r+1}{2}(2g-2).
	\end{equation}

An alternative description for limit linear series has been developed by Osserman in a series of papers starting with \cite{Os} and we use it in our construction of degenerations of secant varieties to a family of nodal curves of compact type.  We now give a summary of the most important facts and definitions in this approach.  We base our discussion on the papers \cite{Os2} and \cite{Os3} to which we will frequently refer for details.

\textbf{Case 0.} We start with the description of limit linear series on a single nodal curve $X$ of compact type. 
Let $\Gamma$ be the dual graph of $X$ and denote by $V(\Gamma)$ its set of vertices and by $E(\Gamma)$ its set of edges.  For a vertex $v\in V(\Gamma)$, let $Y^v$ be the corresponding irreducible component of $X$ and $Y^v_c$ the closure of the complement of $Y^v$ in $X$.  We call an \textit{enriched structure} (cf.~\cite[Definition 2.14]{Os2}) a collection $\{\oo_v\}_{v\in V(\Gamma)}$ of line bundles on $X$ such that $\bigotimes_{v\in V(\Gamma)}\oo_v\simeq \oo_X$ and, for any $v\in V(\Gamma)$:
\[ \oo_v|_{Y^v}\simeq \oo_{Y^v}(-(Y^v\cap Y^v_c)) \text{ and }\oo_v|_{Y^v_c}\simeq\oo_{Y^v_c}(Y^v\cap Y^v_c).  \]
Such structures always exist and since $X$ is of compact type they are unique.  Now let $L$ be a line bundle of degree $d$ on $X$.  A \textit{multidegree} $\vec{d}=(d_v)_{v\in V(\Gamma)}$ of $d$ is a vector with integer entries $d_v$ satisfying $\sum_{v\in V(\Gamma)}d_v=d$. Set $\vec{d}^v:=(0,\ldots,0,d,0,\ldots,0)$ with entry $d$ at $v$ and 0 elsewhere.  We say that $L$ has multidegree $\vec{d}$ if $\deg L|_{Y^v}=d_v$ for all $v\in V(\Gamma)$. Note that multiplying $L$ with $\oo_v$ alters the multidegree in the following way: $\deg (L\otimes\oo_v)|_{Y^v}$ decreases by the number of vertices adjacent to $v$ in $\Gamma$; if $v'\neq v$ is an adjacent vertex to $v$, then $\deg (L\otimes\oo_v)|_{Y^{v'}}$
increases by 1; if $v'\neq v$ is not adjacent to $v$, then $\deg (L\otimes\oo_v)|_{Y^{v'}}$ remains unchanged.  We call the bundles $\oo_v$ \textit{twisting bundles} of $L$ and we say that $L\otimes\oo_v$ is a \textit{twist of} $L$ \textit{at} $v$.  

We may reorganise the twisting bundles of $L$ by considering instead a collection $\{\oo_{(e,v)}\}_{v\in V(\Gamma),e\in E(\Gamma)}$ of line bundles on $X$, where $v$ is a vertex adjacent to the edge $e$, and satisfying the following: given an edge $e\in E(\Gamma)$ and $v\neq v'$ its two adjacent vertices we have $\oo_{(e,v)}|_{Y_{(e,v)}}\simeq\oo_{Y_{(e,v)}}(-(Y^v\cap Y^{v'}))$ and $\oo_{(e,v)}|_{Y_{(e,v')}}\simeq\oo_{Y_{(e,v)}}(Y^v\cap Y^{v'})$, where $Y_{(e,v)}$ denotes the union of the irreducible components of $X$ whose vertices lie in the same connected component of $\Gamma\setminus e$ as $v$.  We have that $X=Y_{(e,v)}\cup Y_{(e,v')}$, so that line bundles on $X$ are uniquely determined by their restrictions to $Y_{(e,v)}$ and $Y_{(e,v')}$.  We also see that $\oo_{(e,v)}\otimes\oo_{(e,v')}\simeq \oo_X$, and furthermore one can check that $\oo_v = \oo_{(e_1,v)}\otimes\cdots\otimes\oo_{(e_n,v)}$, where $e_1,\ldots,e_n$ are all the edges adjacent to $v$.  Note that there is a canonical section $\oo_X\rightarrow\oo_{(e,v)}$ determined as the zero map on $Y_{(e,v)}$ and the canonical inclusion on $Y_{(e,v')}$.  Moreover, the isomorphism $\oo_{(e,v)}\otimes\oo_{(e,v')}\simeq \oo_X$ together with the above section induce a map $\oo_{(e,v')}\rightarrow\oo_X$.  Analogously one obtains a canonical section $\oo_X\rightarrow\oo_{(e,v')}$ and a map $\oo_{(e,v)}\rightarrow\oo_X$.

Given a line bundle $L$ of degree $d$ on $X$ of multidegree $\vec{d}$ and another multidegree  $\vec{d'}$ of $d$ one can always find a unique minimal sequence of twists of $L$ such that the resulting twisted bundle has multidegree $\vec{d'}$.  Denote the product of line bundles $\oo_v$ (or $\oo_{(e,v)}$) occurring in the aforementioned minimal sequence of twists by $\oo_{\vec{d},\vec{d'}}$.

Now let $L$ be a line bundle of degree $d$ and fixed multidegree $\vec{d_0}$ on $X$. 
 For any other multidegree $\vec{d}\neq\vec{d_0}$ of $d$, set  $L^{\vec{d}}:=L\otimes\oo_{\vec{d^0},\vec{d}}$ and $L^v:=L^{\vec{d^v}}|_{Y^v}$.
Given two multidegrees $\vec{d}$, $\vec{d'}$ of $d$, the twists $\oo_{\vec{d},\vec{d'}}$ induce 
a morphism $f_{\vec{d},\vec{d'}}:L^{\vec{d}}\rightarrow L^{\vec{d'}}$.  We now describe how such a morphism is obtained (see \cite[Notation 2.19]{Os2} for more details and for treatment also in the case of nodal curves not of compact type).  Assume first that $\vec{d}$ and $\vec{d'}$ are such that 
$L^{\vec{d}}=L^{\vec{d'}}\otimes \oo_{(e,v)}$ or $L^{\vec{d'}}=L^{\vec{d}}\otimes \oo_{(e,v')}$ for some edge $e\in E(\Gamma)$ and its two adjacent vertices $v\neq v'$.  We define a map $f_e:L^{\vec{d}}\rightarrow L^{\vec{d'}}$ as follows: in the first case $f_e$ is induced by the map  $\oo_{(e,v)}\rightarrow\oo_X$ and in the second case $f_e$ is induced by the section $\oo_X\rightarrow\oo_{(e,v')}$.   If $\vec{d}$ and $\vec{d'}$ are such that $\oo_{\vec{d},\vec{d'}}$ consists of a product of several twist bundles $\oo_{(e_i,v_i)}$, then we define $f_{\vec{d},\vec{d'}}:L^{\vec{d}}\rightarrow L^{\vec{d'}}$ as the composition of the maps $f_{e_i}$ as above.  We also obtain an induced morphism $H^0(X,L^{\vec{d}})\rightarrow H^0(X,L^{\vec{d'}})$. 

A limit linear series of type $\grd$ on the nodal curve of compact type $X$ with enriched structure $\{\oo_v\}_{v\in V(\Gamma)}$ and fixed multidegree $\vec{d_0}$ of $d$ consists of a tuple $(L,(V^v)_{v\in V(\Gamma)})$ where $L$ is a  line bundle of degree $d$ and multidegree $\vec{d_0}$ and the $V^v$ are subspaces of $H^0(X,L^v)$, such that for all multidegrees $\vec{d}$ of $d$, the natural morphism
\[H^0(X,L^{\vec{d}})\rightarrow \bigoplus_{v\in V(\Gamma)}H^0(Y^v,L^v)/V^v,\]
 induced by the maps $f_{\vec{d},\vec{d^v}}$ and restrictions to irreducible components has kernel of dimension at least $r+1$ (see \cite[Definition 2.21]{Os2} for details).  
 
We now summarise the construction of the moduli scheme of limit linear series for families of curves.  From now on, let $B$ be a scheme and $f:T\rightarrow B$ a $B$-scheme.

\textbf{Case 1.}  First, let $\mathscr{X}\rightarrow B$ be a smooth proper family of smooth curves of fixed genus equipped with a section. 
 The functor $\mathscr{G}^r_d(\mathscr{X}/B)$ of linear series of type $\grd$ is defined by associating to each $B$-scheme $T$ the set of equivalence classes of pairs $(\mathscr{L},\mathscr{V})$, where $\mathscr{L}$ is a line bundle of relative degree $d$ on $\mathscr{X}\times_B T$ and $\mathscr{V}\subseteq	\pi_{2*}\mathscr{L}$ is a subbundle of rank $r+1$, where $\pi_2:\mathscr{X}\times_B T\rightarrow T$ is the usual projection.  We say that the pairs $(\mathscr{L},\mathscr{V})$ and $(\mathscr{L}',\mathscr{V}')$ are equivalent if there exists a line bundle $\mathscr{M}$ on $T$ and an isomorphism $\varphi:\mathscr{L}\rightarrow\mathscr{L}'\otimes \pi_2^*\mathscr{M}$ such that $\pi_{2*}\varphi$ maps $\mathscr{V}$ into $\mathscr{V}'$.  The last condition makes sense because of the following: by the projection formula, there is a natural isomorphism $\pi_{2*}(\mathscr{L}'\otimes\pi_2^*\mathscr{M})\simeq(\pi_{2*}\mathscr{L}')\otimes\mathscr{M}$, which means that there is an induced morphism $\pi_{2*}\mathscr{L}\rightarrow(\pi_{2*}\mathscr{L}')\otimes\mathscr{M}$.
Take an open cover $\{U_i\}$ of $T$ such that $\mathscr{M}|_{U_i}\simeq\oo_{U_i}$.  Thus, for each $U_i$ we have induced isomorphisms $\pi_{2*}\mathscr{L}|_{U_i}\rightarrow(\pi_{2*}\mathscr{L}'|_{U_i})\otimes\oo_{U_i}=\pi_{2*}\mathscr{L}'|_{U_i}$.  The condition requires that the above morphism restricts to a morphism $\mathscr{V}|_{U_i}\rightarrow\mathscr{V}'|_{U_i}$ for all $U_i$.  One can see that if the condition is satisfied by a trivialising cover $\{U_i\}_{i\in I}$, then it will be satisfied by any other such cover $\{\widetilde{U}_j\}_{j\in J}$ because the transition functions of $\mathscr{M}$ on $U_i\cap \widetilde{U}_j$ are elements of $\oo^*_{U_i\cap \widetilde{U}_j}$ which leave the $\mathscr{V}'|_{U_i\cap \widetilde{U}_j}$ unchanged.
  The functor we just described is represented by a scheme $G^r_d(\mathscr{X}/B)$ which is proper over $B$.

\textbf{Case 2.}  Next, let $\mathscr{X}\rightarrow B$ be a flat proper family of nodal curves of compact type of fixed genus such that no nodes are smoothed and where $B$ is regular and connected.    For details of the constructions and results we refer to \cite{Os3}.
All fibres will have the same dual graph $\Gamma$ and for each vertex $v\in V(\Gamma)$, we denote the corresponding irreducible component of $\mathscr{X}$ by $\mathscr{Y}^v$.  For a line bundle $\mathscr{L}$ of relative degree $d$ on $\mathscr{X}\times_B T$, we say that it has multidegree $\vec{d}=(d_v)_{v\in V(\Gamma)}$ if $\mathscr{L}|_{\mathscr{Y}^v\times_B T}$ is of relative degree $d_v$, for all $v\in V(\Gamma)$.  
We also have an enriched structure $\{\oo_v\}_{v\in V(\Gamma)}$ on $\mathscr{X}$ defined as in the case of a single curve, i.e.~the line bundles $\oo_v$ on $\mathscr{X}$ satisfy $\bigotimes_{v\in V(\Gamma)}\oo_v\simeq\oo_{\mathscr{X}}$ and for all $v\in V(\Gamma)$:
\[ \oo_v|_{\mathscr{Y}^v}\simeq \oo_{\mathscr{Y}^v}(-(\mathscr{Y}^v\cap \mathscr{Y}^v_c)) \text{ and }\oo_v|_{\mathscr{Y}^v_c}\simeq\oo_{\mathscr{Y}^v_c}(\mathscr{Y}^v\cap \mathscr{Y}^v_c),  \]
where $\mathscr{Y}^v\cap \mathscr{Y}^v_c$ should be interpreted as the corresponding divisor on $\mathscr{Y}^v$.  As before, we also have the alternative formulation of the twist bundles in terms of the $\oo_{(e,v)}$, defined as in the case of the single nodal curve (replacing of course $Y$ by $\mathscr{Y}$).

 Now fix a choice of multidegree $\vec{d_0}$ of $d$.  Given a line bundle $\mathscr{L}$ of relative degree $d$ on $\mathscr{X}\times_B T$ and multidegree $\vec{d_0}$ and another multidegree $\vec{d}\neq\vec{d_0}$ of $d$, we construct a line bundle $\mathscr{L}^{\vec{d}}$ on $\mathscr{X}\times_B T$ of multidegree $\vec{d}$ as in the case of single curves via a minimal sequence of twists of $\mathscr{L}$ by bundles $\pi_1^*\oo_v$, where $\pi_1:\mathscr{X}\times_B T\rightarrow \mathscr{X}$ is the projection.  As before, given two multidegrees $\vec{d}$ and $\vec{d'}$ of $d$, let $f_{\vec{d},\vec{d'}}:\mathscr{L}^{\vec{d}}\rightarrow\mathscr{L}^{\vec{d'}}$ be the unique map obtained by performing the minimal number of twists. 

The Picard functor $\mathscr{P}^{\vec{d_0}}(\mathscr{X}/B)$ is defined by associating to each $B$-scheme $T$ the set of isomorphism classes of line bundles $\mathscr{L}$ of relative multidegree $\vec{d_0}$ on $\mathscr{X}\times_B T$.  Similarly, $\mathscr{P}^d(\mathscr{Y}^v/B)$ denotes the Picard functor of line bundles of relative degree $d$. We now define $\mathscr{P}$ as the following fibre product:
 \begin{figure}[H]\centering
  \begin{tikzpicture}
    \matrix (m) [matrix of math nodes,row sep=4em,column sep=7em,minimum width=1em]
  {
     \mathscr{P} &   \prod_{v\in V(\Gamma)}\mathscr{G}^r_{d}(\mathscr{Y}^v/B) \\
     \mathscr{P}^{\vec{d_0}}(\mathscr{X}/B) & \prod_{v\in V(\Gamma)}\mathscr{P}^d(\mathscr{Y}^v/B)\\};
  \path[-stealth]
    (m-1-1) edge  (m-1-2)
            edge  (m-2-1)
     (m-1-2) edge node [auto]{$\prod_{v\in V(\Gamma)}\phi_v$} (m-2-2)
     (m-2-1) edge node [auto]{$(\psi_v)_{v\in V(\Gamma)}$} (m-2-2);
 \end{tikzpicture}
 \end{figure}

\noindent where for all $v\in V(\Gamma)$, $\phi_v:\mathscr{G}^r_d(\mathscr{Y}^v/B)\rightarrow\mathscr{P}^d(\mathscr{Y}^v/B)$ is the forgetful morphism $(\mathscr{M},\mathscr{V})\mapsto \mathscr{M}$ and $\psi_v:\mathscr{P}^{\vec{d_0}}\rightarrow\mathscr{P}^d(\mathscr{Y}/B)$ maps $\mathscr{L}$ to $\mathscr{L}^v=\mathscr{L}^{\vec{d}^v}|_{\mathscr{Y}^v\times_B T}$ via twists and restriction to $\mathscr{Y}^v\times_B T$.  

A $T$-valued point of $\mathscr{P}$ consists of equivalence classes (equivalence relation as in Case 1) of tuples $(\mathscr{L},(\mathscr{V}^v)_{v\in V(\Gamma)})$, where $\mathscr{L}$ is a line bundle of relative multidegree $\vec{d_0}$ on $\mathscr{X}\times_B T$ and the $\mathscr{V}^v$ are rank-$(r+1)$ subbundles of $\pi_{2*}\mathscr{L}^v$.

We define the functor $\mathscr{G}^r_d(\mathscr{X}/B)$ as a subfunctor of the functor of points of $\mathscr{P}$ as follows: a tuple $(\mathscr{L},(\mathscr{V}^v)_{v\in V(\Gamma)})$ as above is a $T$-valued point of $\mathscr{G}^r_d(\mathscr{X}/B)(T)$ if for all multidegrees $\vec{d}$ of $d$, the map
\begin{equation}\label{eq:condlls1}
\pi_{2*}\mathscr{L}^{\vec{d}}\rightarrow\bigoplus_v(\pi_{2*}\mathscr{L}^v)/\mathscr{V}^v 
\end{equation} 
induced by the restriction to the component $\mathscr{Y}^v$ and the twist maps $f_{\vec{d},\vec{d}^v}$ has its $(r+1)$-st degeneracy locus equal to the whole of $T$.   The functor we constructed is also represented by a scheme $G^r_d(\mathscr{X}/B)$ proper over $B$ and can be shown to be independent (up to isomorphism) of the choice of fixed multidegree $\vec{d_0}$.

\textbf{Case 3.}  Now assume that $\pi:\mathscr{X}\rightarrow B$ is a family of curves of compact type with nodes that may be smoothed and satisfying the conditions listed in Definition 3.1 of \cite{Os}.  We call such a family a \textit{smoothing family}.  In this case the dual graph of the fibres may vary and the components $\mathscr{Y}^v$ may not be defined over all of $B$.  Denote by $\Gamma_b$ the dual graph of the fibre $\mathscr{X}_b$ for some $b\in B$.  One can check that if $b$ specialising to $b'$ are points of $B$, then there is a unique contraction map $cl_{b,b'}:\Gamma_{b'}\rightarrow\Gamma_{b}$ induced on vertices by associating to a component $Y'$ of $\mathscr{X}_{b'}$ the component $Y$ of $\mathscr{X}_b$ containing $Y'$ in its closure.  On edges, $cl_{b,b'}$ maps $e\in E(\Gamma_{b'})$ to the corresponding edge of $\Gamma_b$ if there is a node of $\mathscr{X}_b$ specialising to the node on $\mathscr{X}_{b'}$ corresponding to $e$; otherwise, $e$ is contracted.  If $b'$ specialises to $b''$, then $cl_{b,b''}=cl_{b,b'}\circ cl_{b',b''}$. 
Assume also that there exists a unique maximally degenerate fibre over some $b_0\in B$ with dual graph $\Gamma_0$ and that, for each $b\in B$, there exist contractions $cl_b:\Gamma_0\rightarrow\Gamma_b$ (unique up to automorphism of $\Gamma_0$) such that if $b$ specialises to $b'$, then $cl_b=cl_{b,b'}\circ cl_{b'}$.  For details see \cite[Section 2]{Os3}.  With this additional assumption the family is called an \textit{almost local smoothing family}.

 If a node corresponding to $e$ is not smoothed by $\pi$ and occurs in the fibre $\mathscr{X}_b$, we call its corresponding edge $e_b$.   
To make sense of the the twist bundles $\oo_{(e,v)}$, we need to first introduce some notation (for details we again refer to \cite[Section 2]{Os3}).  Let $\Delta'$ denote the non-smooth locus of $\pi$ and, for $e\in E(\Gamma_0)$, let $\Delta'_e$ denote the connected component of $\Delta'$ consisting of the union of non-smoothed nodes corresponding to edges $e_b$ of the dual graphs of fibres $\mathscr{X}_b$. 
Let $\Delta_e\subset B$ denote the image of $\Delta'_e$ under $\pi$ and, given a $v\in V(\Gamma_0)$ adjacent to $e$, let $\mathscr{Y}_{(e,v)}\in\pi^{-1}(\Delta_e)$ be the unique closed set such that for each $b\in\Delta_e$, the fibre $(\mathscr{Y}_{(e,v)})_b$ is equal to the union of the components of $\mathscr{X}_b$ corresponding to the vertices of $\Gamma_b$ lying in the same connected component as $v$ in $\Gamma_b\setminus e_b$.
Furthermore, one can show that $\Delta_e$ is a divisor of $B$ and if $v\neq v'$ are two vertices adjacent to an edge $e\in\Gamma_0$, then $\mathscr{Y}_{(e,v)}\cup\mathscr{Y}_{(e,v')}=\pi^{-1}\Delta_e$ and $\mathscr{Y}_{(e,v)}\cap\mathscr{Y}_{(e,v')}=\Delta'_e$.  In this case $\mathscr{Y}_{(e,v)}$ is a divisor in $\mathscr{X}$ and we set $\oo_{(e,v)}=\oo_{\mathscr{X}}(\mathscr{Y}_{(e,v)})$.
Moreover we now have that the $\oo_{(e,v)}\otimes\oo_{(e,v')}$ are isomorphic to $\oo_{\mathscr{X}}$ only locally on $B$: $\oo_{(e,v)}\otimes\oo_{(e,v')}=\oo_{\mathscr{X}}(\pi^{-1}\Delta_e)=\pi^*\oo_B(\Delta_e)$, so for an open cover $\{U_i\}_{i\in I}$ of $B$ trivialising $\pi^*\oo_B(\Delta_e)$ we have that 
\begin{equation}\label{eq:trivialtensor}
(\oo_{(e,v)}\otimes\oo_{(e,v')})|_{\pi^{-1}(U_i)}\simeq \oo_{\pi^{-1}(U_i)}.
\end{equation}
From now on we fix such a cover $\{U_i\}_{i\in I}$ of $B$.
 
 Let $\mathscr{L}$ be a line bundle on $\mathscr{X}\times_B T$ of relative degree $d$ and fixed multidegree $\vec{d_0}$.  This means that its restriction to $\mathscr{X}_{b_0}\times_B T$ has multidegree $\vec{d_0}$, while the restrictions to the other $\mathscr{X}_b\times_B T$ have the unique multidegree $\vec{d_0^b}$ resulting as follows: if the edge $e\in E(\Gamma_0)$ is contracted, then we replace the vertices $v\neq v'$ adjacent to $e$ by a vertex $w$ and set $(d_0^b)_w=(d_0)_v + (d_0)_{v'}$; if $e$ is not contracted, then $(d_0^b)_v=(d_0)_v$ and $(d_0^b)_{v'}=(d_0)_{v'}$ for the adjacent vertices $v\neq v'$ to $e$.  Given another multidegree $\vec{d}\neq\vec{d_0}$ of $d$, we obtain a line bundle $\mathscr{L}^{\vec{d}}$ on $\mathscr{X}\times_B T$ of multidegree $\vec{d}$ from $\mathscr{L}$ by multiplying with twisting bundles $\pi_1^*\oo_{(e,v)}$ as in Case 2.   
  Given any two multidegrees $\vec{d}$ and $\vec{d'}$ of $d$ and $i\in I$, we have maps
 \[ f_{\vec{d},\vec{d'},i}:\mathscr{L}^{\vec{d}}|_{\pi^{-1}(U_i)}\rightarrow\mathscr{L}^{\vec{d'}}|_{\pi^{-1}(U_i)}, \]
induced in a similar way as in Case 2, but this time using the local isomorphisms (\ref{eq:trivialtensor}).
 
To define the functor $\mathscr{G}^r_d(\mathscr{X}/B)$ in this case we consider the functor that associates to $T$ the set of equivalence classes of tuples $(\mathscr{L}, (\mathscr{V}^v)_{v\in V(\Gamma_0)})$, where $\mathscr{L}$ is a line bundle of fixed multidegree $\vec{d_0}$ on $\mathscr{X}\times_B T$ and for each vertex $v\in V(\Gamma_0)$, the $\mathscr{V}^v$ are subbundles of rank $r+1$ of the $\pi_{2*}\mathscr{L}^{\vec{d}^v}$, where we now use the bundles $\mathscr{L}^{\vec{d^v}}$ which may be defined over the whole family instead of the bundles $\mathscr{L}^v$ which required the existence of the components $\mathscr{Y}^v$.  Set $f:T\rightarrow B$.  We say that a $T$-valued point $(\mathscr{L}, (\mathscr{V}^v)_{v\in V(\Gamma_0)})$ is in $\mathscr{G}^r_d(\mathscr{X}/B)(T)$ if for all $i\in I$ and all multidegrees $\vec{d}$ of $d$, the map
\begin{equation}\label{eq:condlls2}
\pi_{2*}\mathscr{L}^{\vec{d}}|_{(f\circ\pi_2)^{-1}(U_i)}\rightarrow
\bigoplus_v \Bigl(\pi_{2*}\mathscr{L}^{\vec{d}^v}|_{(f\circ\pi_2)^{-1}(U_i)}\Bigr)/\mathscr{V}^v|_{f^{-1}(U_i)}, 
\end{equation} 
induced by the local versions of the twist maps, has its $(r+1)$-st degeneracy locus equal to all of $U_i$.  It can be shown that the functor described above is independent (up to isomorphism) of choice of open cover $\{U_i\}_{i\in I}$ and fixed multidegree $\vec{d_0}$.

\textbf{Relation between Case 2 and Case 3.} Note that Case 2 can be seen as a special case of Case 3, i.e.~with $\Delta_e=B$ for all edges $e\in E(\Gamma_0)$.  One can show that in this case the two constructions yield isomorphic moduli functors of limit linear series and the natural isomorphism between them is induced by restriction to the components $\mathscr{Y}^v$ of $\mathscr{X}$. 

All the constructions above are compatible with base change and the fibre over $t\in B$ is a space of Eisenbud-Harris limit linear series when $\mathscr{X}_t$ is reducible and a space of usual linear series when $\mathscr{X}_t$ is smooth.

We now describe a scheme that parametrises secant varieties for a family of nodal curves of compact type equipped with limit linear series.
\begin{prop}\label{prop:secspacecomtype}
Fix a family of curves $\mathscr{X}\rightarrow B$ over a scheme $B$ like in Case 1, Case 2, or Case 3 above and equipped with a linear series $\ell$ of type $\grd$.
There exists a scheme $\mathcal{V}^{e-f}_{e}(\mathscr{X},\ell)$ proper over $B$, compatible with base change, whose point over every $t\in B$ parametrises pairs $[\mathscr{X}_t,\mathscr{D}_t]$ of curves and divisors such that $\mathscr{D}_t$ is an $(e-f)$-th secant divisor of $\ell_t$.  Furthermore, every irreducible component of $\mathcal{V}^{e-f}_{e}(\mathscr{X},\ell)$ has dimension at least $\dim B +e- f(r+1-e+f)$.
\end{prop}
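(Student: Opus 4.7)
The plan is to construct $\mathcal{V}^{e-f}_e(\mathscr{X},\ell)$ by relativising over $B$ the degeneracy-locus description of $V^{e-f}_e(l)$ recalled in Section \ref{sec:prelimsec}, working in each of the three cases of the Osserman framework with the appropriate form of the relative symmetric product.

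In Case 1 (smooth family), let $\mathscr{X}_e \to B$ be the relative $e$-th symmetric product, smooth of relative dimension $e$, and let $\mathcal{U}\subset\mathscr{X}\times_B\mathscr{X}_e$ be the universal divisor with the two projections $\sigma,\tau$. Writing $\ell=(\mathscr{L},\mathscr{V})$, I would set $\mathcal{E}=p^{*}\mathscr{V}$ (with $p:\mathscr{X}_e\to B$) and $\mathcal{F}=\tau_{*}(\sigma^{*}\mathscr{L}\otimes\oo_{\mathcal{U}})$, which is locally free of rank $e$, and take $\Phi:\mathcal{E}\to\mathcal{F}$ to be the bundle morphism induced by the restriction $\sigma^{*}\mathscr{L}\to\sigma^{*}\mathscr{L}\otimes\oo_{\mathcal{U}}$; fibrewise $\Phi$ is the restriction map $V\to H^{0}(L/L(-D))$ exactly as in the single-curve setting. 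The scheme $\mathcal{V}^{e-f}_e(\mathscr{X},\ell)$ is then defined to be the $(e-f)$-th degeneracy locus of $\Phi$, a closed subscheme of $\mathscr{X}_e$; it inherits properness over $B$ and compatibility with base change from those of $\mathscr{X}_e$ and of degeneracy-locus formation, and the classical codimension bound for the $(e-f)$-th degeneracy locus of a morphism between bundles of ranks $r+1$ and $e$ yields the asserted dimension estimate.

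In Cases 2 and 3 the fibres are reducible and $\mathscr{X}_e$ must be replaced by the disjoint union, over multidegrees $\vec{e}=(e_v)_{v\in V(\Gamma_0)}$ of $e$, of the fibre products $\mathscr{X}_{\vec{e}}:=\prod_{v\in V(\Gamma_0)}\mathscr{Y}^v_{e_v}$ over $B$ (and, in Case 3, of the corresponding spaces built from the closed subsets $\mathscr{Y}_{(e,v)}$ using the local isomorphisms (\ref{eq:trivialtensor})). On each $\mathscr{X}_{\vec{e}}$ the componentwise construction of Case 1 is performed using the $v$-aspect line bundle $\mathscr{L}^{\vec{d}^v}|_{\mathscr{Y}^v\times_B T}$ and its subbundle $\mathscr{V}^v$, producing componentwise morphisms $\Phi^v$; the secant condition then requires, for some distribution $(f_v)$ of the deficiency with $\sum_v f_v=f$, that the $(e_v-f_v)$-th degeneracy locus of $\Phi^v$ be attained on each aspect. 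This is the natural analogue of the condition $\dim(\ell-D)\ge r-e+f$ and it is compatible with the conditions imposed on $\ell$ by the twist maps $f_{\vec{d},\vec{d}^v}$ appearing in (\ref{eq:condlls1})--(\ref{eq:condlls2}). The scheme $\mathcal{V}^{e-f}_e(\mathscr{X},\ell)$ is then the union over admissible $\vec{e}$ of these degeneracy loci.

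The delicate point, which I expect to be the main obstacle, is the precise formulation of the secant condition at reducible fibres and the verification that the resulting scheme is well-defined, i.e.~independent of the choices of fixed multidegree $\vec{d_0}$ and, in Case 3, of trivialising cover, by gluing the strata via the local twist maps $f_{\vec{d},\vec{d}^v,i}$ exactly as in Osserman's construction. Granting this, properness of $\mathcal{V}^{e-f}_e(\mathscr{X},\ell)$ over $B$ is automatic from its being closed in a scheme proper over $B$; compatibility with base change is inherited from the formation of degeneracy loci and from the base-change compatibility of the underlying limit linear series construction; and the dimension bound $\dim B+e-f(r+1-e+f)$ follows in each stratum from the relative dimension $\sum_v e_v=e$ of $\mathscr{X}_{\vec{e}}$ over $B$ combined with the expected codimension $f(r+1-e+f)$ of the degeneracy locus.
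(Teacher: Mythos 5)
Your Case 1 construction coincides with the paper's: both define $\mathcal{V}^{e-f}_e(\mathscr{X},\ell)$ inside the relative symmetric product as the $(e-f)$-th degeneracy locus of the bundle map $\mathscr{V}\rightarrow(\tau_2)_*(\tau_1^*\mathscr{L}\otimes\oo_{\mathcal{U}_T})$, and properness, base-change compatibility and the dimension bound follow as you say. The gap is in Cases 2 and 3, precisely at the point you yourself flag as delicate and then resolve incorrectly. The paper does \emph{not} stratify $Sym^e(\mathscr{X}/B)$ by multidegrees $\vec{e}$ of the divisor, and it does \emph{not} distribute the deficiency as $\sum_v f_v=f$ with an $(e_v-f_v)$-condition on each component. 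Instead, for every vertex $v$ it completes the restriction $\mathscr{D}^v$ of the single relative degree-$e$ divisor by the preimages of the nodes (taken with the multiplicities needed to reach degree exactly $e$) and imposes the \emph{full} $(e-f)$-th degeneracy condition --- same $e$, same $f$ --- on the $v$-aspect $\mathscr{V}^v\rightarrow(\tau_2)_*(\tau_1^*\mathscr{L}^{\vec{d}^v}\otimes\oo_{\mathcal{U}_T})$, simultaneously for all $v$. This is the correct analogue of the Eisenbud--Harris compatibility: each aspect is a full $\grd$ on its component, the secancy defect $r+1-e+f$ of the kernel does not split additively across components, and it is exactly this formulation (divisor completed by the nodes, full rank drop on each aspect) that feeds into the vanishing-sequence computations at the node $p$ in Sections 5 and 6.

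Concretely, your condition would fail in two ways. First, it is not the flat limit of the secant condition: a degree-$e$ divisor specialising so that $e_v$ points land on $Y^v$ imposes, via the twists, conditions on $\mathscr{V}^v$ through the degree-$e$ divisor $\mathscr{D}^v+\sum_i q_i^v$, not through the degree-$e_v$ part alone, so your loci would in general be strictly larger or smaller than the closure of the secant locus over the smooth fibres, breaking properness of the family of secant varieties (limits of secant divisors need not satisfy your condition, or conversely). Second, the union over multidegrees $\vec{e}$ and over distributions $(f_v)$ involves an existential quantifier and a union of strata that is not manifestly a closed subfunctor of $Sym^e(\mathscr{X}/B)(T)$ compatible with base change, whereas the paper's definition is a finite conjunction of degeneracy-locus conditions (one per vertex, plus the limit-linear-series conditions already carried by $\ell$) and is therefore automatically closed, proper over $B$, and base-change compatible. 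To repair your argument you should drop the stratification and the splitting of $f$, keep the ambient space $Sym^e(\mathscr{X}/B)$, and impose for each $v$ the $(e-f)$-th degeneracy condition on the node-completed divisor with respect to the twisted aspect $(\mathscr{L}^{\vec{d}^v},\mathscr{V}^v)$.
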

\begin{proof}
 We construct the functor $\mathcal{V}^{e-f}_{e}(\mathscr{X},\ell)$ as a subfunctor of the functor of points of the symmetric product $Sym^e(\mathscr{X}/B)$ (which we also denote by $Sym^e(\mathscr{X}/B)$).  From the degeneracy locus construction it follows that it is representable by a scheme that is proper over $B$ and compatible with base change and which we also denote by $\mathcal{V}^{e-f}_{e}(\mathscr{X},\ell)$. 
 
\textbf{Case 1.} Suppose first that the family $\mathscr{X}\rightarrow B$ is like in Case 1 and all the fibres of the family are nonsingular.  In this case, for any $T\rightarrow B$ scheme over $B$, we saw above that $\ell=\grd$ on $\mathscr{X}/B$ is given by a pair $(\mathscr{L},\mathscr{V})$, where $\mathscr{L}$ is a line bundle of degree $d$ on $\mathscr{X}\times_B T$ and $\mathscr{V}\subseteq\pi_{2*}\mathscr{L}$ is a vector bundle of rank $r+1$ on $B$, where $\pi_2$ is the second projection from the fibre product onto $T$.  Let $\mathcal{U}\subset\mathscr{X}\times_B Sym^e(\mathscr{X}/B)$ denote the universal family and $\mathcal{U}_T=\mathcal{U}\times_B T$.  Consider the following diagram 
 \begin{figure}[H]\centering\label{picture}
  \begin{tikzpicture}
    \matrix (m) [matrix of math nodes,row sep=2em,column sep=1em,minimum width=1em]
  {
     & (\mathscr{X}\times_B T)\times_B Sym^e(\mathscr{X}/B) & \supset \mathcal{U}_T \\
     \mathscr{X}\times_B T & & T\\};
  \path[-stealth]
    (m-1-2) edge node [auto,swap] {$\tau_1$} (m-2-1)
            edge node [auto]{$\tau_2$}  (m-2-3);
 \end{tikzpicture}
 \end{figure}
\noindent where $\tau_1$ and $\tau_2$ are the usual projections.  
Then the $T$-valued point $[\mathscr{X}\times_B T,\mathscr{D}]\in Sym^e(\mathscr{X}/B)(T)$ belongs to $\mathscr{V}^{e-f}_{e}(\mathscr{X},\ell)(T)$ if the $(e-f)$-th degeneracy locus of the map
\[ \mathscr{V} \rightarrow (\tau_2)_{*}(\tau_1^*\mathscr{L}\otimes\oo_{\mathcal{U}_T}) \]
is the whole of $T$.  By construction $\mathscr{V}^{e-f}_{e}(\mathscr{X},\ell)$ is compatible with base change, so it is a functor, and it has the structure of a closed subscheme, hence it is representable and the associated scheme is proper.  

\textbf{Case 2.}  Now suppose that we are in Case 2 above and the fibres have nodes that are not smoothed by the family $\mathscr{X}$ and as usual let $\Gamma$ denote the dual graph of the fibres.  As we saw above, in this case $\ell=\grd$ on $\mathscr{X}$ is, for any $T\rightarrow B$ scheme over $B$, a tuple $(\mathscr{L},(\mathscr{V}^v)_{v\in V(\Gamma)})$, with $\mathscr{L}$ a line bundle of fixed multidegree $\vec{d_0}$ of	 $d$ on $\mathscr{X}\times_B T$ and $\mathscr{V}^v\subset (\pi_2)_* \mathscr{L}^v$ a subbundle of rank $r+1$ on $T$ subject to the condition on the map in (\ref{eq:condlls1}).  We define $\mathcal{V}'(\mathscr{Y}^v,\mathscr{V}^v)\subset Sym^e(\mathscr{Y}^v/B)$ as follows: we say that a $T$-valued point $[\mathscr{Y}^v\times_B T,\mathscr{E}^v]\in Sym^e(\mathscr{Y}^v/B)(T)$ belongs to $\mathcal{V}'(\mathscr{Y}^v,\mathscr{V}^v)(T)$ if the $(e-f)$-th degeneracy locus of the map 
\[ \mathscr{V}^v \rightarrow (\tau_2)_{*}(\tau_1^*\mathscr{L}^v\otimes\oo_{\mathcal{U}_T}) \]
is the whole of $T$, where the $\tau_1,\tau_2$, and $\mathcal{U}_T$ are defined analogously for  the family $\mathscr{Y}^v$.

Thus, a $T$-valued point $[\mathscr{X}\times_B T,\mathscr{D}]\in Sym^e(\mathscr{X}/B)(T)$ belongs to $\mathscr{V}^{e-f}_{e}(\mathscr{X},\ell)(T)$ if, for all vertices $v$ of $\Gamma$, the $T$-valued points $[\mathscr{Y}^v\times_B T,\mathscr{D}^v+q_i^v]$ belong to $\mathcal{V}'(\mathscr{Y}^v,\mathscr{V}^v)(T)$, where $\mathscr{D}^v$ now denotes the specialisation of the relative divisor $\mathscr{D}$ on the component $\mathscr{Y}^v\times_B T$ and $q_i^v$ denote the preimages of the nodes belonging to $\mathscr{Y}^v\times_B T$ (that may appear with multiplicity so that the relative divisor $\mathscr{D}^v + q_i^v$ is of correct degree $e$).  While this is the most useful description for practical applications, representability is best seen by treating Case 2 as a special case of Case 3 as explained in the summary above.

\textbf{Case 3.}  Now suppose the family $\mathscr{X}$ is a local smoothing family as in Case 3 above.  As we have seen already, a limit linear series $\ell$ of type $\grd$ on $\mathscr{X}$ is, for any $T\rightarrow B$ scheme over $B$, a tuple $(\mathscr{L},(\mathscr{V}^v)_{v\in V(\Gamma_0)})$, where $\Gamma_0$ is the dual graph of the unique maximally degenerate fibre $\mathscr{X}_{b_0}$ of the family, $\mathscr{L}$ is a line bundle of fixed multidegree $\vec{d_0}$ on $\mathscr{X}\times_B T$, and for each $v\in V(\Gamma_0)$, the $\mathscr{V}^v$ are subbundles of rank $r+1$ of the twists $\pi_{2*}\mathscr{L}^{\vec{d}^v}$, subject to condition on the maps in (\ref{eq:condlls2}). 

As we mentioned before, in this case we do not always have access to components $\mathscr{Y}^v$ globally.  We make use instead of the bundles $\mathscr{L}^{\vec{d}^v}$ which are defined everywhere.  The main advantage is  the following useful property, which follows from the rules governing multidegrees of line bundles on smoothing families described on page \pageref{eq:trivialtensor}: for any $b\in B$, the restriction $\mathscr{L}^{\vec{d}^v}|_{\mathscr{X}_b\times_B T}$ has degree $d$ only on the component of $\mathscr{X}_b$ whose vertex corresponds to the image $cl_{b}(v)$; its degree on any other component vanishes.

We say that the $T$-valued point $[\mathscr{X}\times_B T,\mathscr{D}]$ belongs to $\mathcal{V}_e^{e-f}(\mathscr{X},\ell)(T)$ if, for every $v\in V(\Gamma_0)$ the $(e-f)$-th degeneracy locus of the map
\begin{equation}\label{eq:deglocusblabla}
\mathscr{V}^v \rightarrow (\tau_2)_{*}(\tau_1^*\mathscr{L}^{\vec{d}^v}\otimes\oo_{\mathcal{U}_T})
\end{equation}
is the whole of $T$, where $\tau_1$, $\tau_2$, and $\mathcal{U}_T$ are defined as in the smooth case.  That $\mathcal{V}_e^{e-f}(\mathscr{X},\ell)$ is represented by a closed subscheme of $Sym^e(\mathscr{X}/B)$ proper over $B$ follows from the properties of degeneracy loci, as in Case 1.

Note that if $T=\{b\}$, with $b\in B$ belonging to the image of the smooth locus of $\pi$ and if $f:T\rightarrow B$ is the inclusion, the degeneracy locus condition becomes exactly the one from Section \ref{sec:prelimsec}.  If on the other hand $b\in \Delta_{e'}$, for some edge $e'\in E(\Gamma_0)$ and $f: T\rightarrow B$ is the inclusion, then $\mathscr{X}\times_B T=Y_{(e',v)}\cup Y_{(e',v')}$, with $v,v'$ adjacent vertices to $e'$ and using the notation from Case 0 above. As before, set $\Delta'_{e'}=Y_{(e',v)}\cap Y_{(e',v')}$ and let $D_{(e',v)}$ be the restriction of the divisor $\mathscr{D}$ to $Y_{(e',v)}$.  Then the degeneracy locus condition for the vertex $v$ asks that the divisor $D_{(e',v)}+\Delta'_{e'}$ belongs to the $(e-f)$-th degeneracy locus of the map (\ref{eq:deglocusblabla}) restricted to $Y_{(e',v)}$, where $\Delta'_{e'}$ may appear with multiplicity.  Now let $D_v$ denote the restriction of $D_{(e',v)}$ to the component $Y^v$ corresponding to $v$ and let $\Delta'_{e_1},\ldots,\Delta'_{e_m}$ denote the nodes corresponding to the components of $Y_{(e',v)}$ adjacent to $Y^v$. 
 Using the properties of limit linear series, one sees that the degeneracy locus condition translates to the condition that the divisor $D_v + \Delta'_{e'} + \Delta'_{e_1} + \cdots + \Delta'_{e_m}$ (where the $\Delta'_{e_i}$ and $\Delta'_{e'}$ are understood to occur with the correct multiplicity) of degree $e$ on $Y^v$ belongs to the $(e-f)$-th degeneracy locus of the map (\ref{eq:deglocusblabla}) restricted to $Y^v$.

The dimension bound follows from the degeneracy locus construction of $\mathcal{V}^{e-f}_{e}(\mathscr{X},\ell)$.
\end{proof}

For a linear series $\ell_1$ of type $\grdop{r_1}{d_1}$ on $\mathscr{X}$, denote by $\boldsymbol{\Gamma}_e(\mathscr{X},\ell_1)$ the relative secant variety $\mathcal{V}^{r_1}_{e}(\mathscr{X},\ell_1)$.
Thus in this paper we are interested in the intersection $\boldsymbol{\Gamma}_e(\mathscr{X},\ell_1)\cap\mathcal{V}^{e-f}_{e}(\mathscr{X},\ell_2)$, as we shall see explicitly in what follows.

\section{Intersections of incidence and secant varieties}\label{sec:proofincidsecinter}
In this section we give a proof of Theorem \ref{thm:secant}.  We recall the setup:
consider a complete linear series $l_1=\grdop{r_1}{d_1}$ on a general curve of genus $g$.  We study the intersection of $\Gamma_e(l_1)$ and $V_e^{e-f}(l_2)$, where $l_2=\grdop{r_2}{d_2}=K_C-l_1$ is the residual linear series to $l_1$, when 
\begin{equation}\label{eq:conditionnonexsec}
\dim\Gamma_e(l_1) + \exp \dim V_e^{e-f}(l_2) \leq e-\rho(g,r_1,d_1)-1
\end{equation} 
and prove that it is empty for an arbitrary linear series $l_1\in G^{r_1}_{d_1}(C)$ when $f=1$.

\begin{rem}To get the correct dimensional estimate when we allow for the series $l_1$ to vary in moduli (so we do not consider just the general series of type $\grdop{r_1}{d_1}$), consider the correspondence
\[ \Lambda=\{ (D,l_1)\in C_e\times G^{r_1}_{d_1}(C) \mid D\in \Gamma_e(l_1)\cap V_e^{e-f}(K_C-l_1) \}\subset C_e \times G^{r_1}_{d_1}. \]
By construction, $\Lambda$ has expected dimension
\[ \text{exp}\dim\Lambda = \rho(g,r_1,d_1) + \dim \Gamma_e(l_1) + \dim V^{e-f}_e(K_C-l_1) - e, \]
so if this number is negative, we expect $\Lambda$ to be empty.
\end{rem}

\subsection{The case of minimal pencils}\label{sec:pencil}
Before proving Theorem \ref{thm:secant} in general we first focus on the case of minimal pencils.  This will serve as a prototypical example of the strategy we develop in Section \ref{sec:thmsecant} to check the emptiness of the intersection of incidence and secant varieties 
\[ \Gamma_e(l_1) \cap V^{e-f}_e(K_C-l_1) \]
when condition (\ref{eq:conditionnonexsec}) is satisfied.  We chose to treat this special case separately as the computations are easier to follow than in general and they therefore better illustrate the argument.  In addition, it allows us to discuss a counterexample to the existence of secant divisors, which would otherwise be lost in the analysis.
 
Let $l_1=\grdop{1}{d_1}$ be a minimal pencil, i.e.~such that the Brill-Noether number 
\[ \rho(g,1,d_1)=1. \]
It follows that
\begin{equation}
g=2d_1-3.
\end{equation}
Let $l_2=\grdop{r_2}{d_2}=K_C-l_1=\grdop{d_1-3}{3d_1-8}$.  
Then $\dim \Gamma_e(l_1)=1$ and, as mentioned in the Introduction
\[\exp \dim V^{e-f}_e(K_C-l_1) = e-f(r_2+1-e+f). \]
Thus the non-existence condition (\ref{eq:conditionnonexsec}) of Theorem \ref{thm:secant} becomes
\[1+e-f(r_2+1-e+f)\leq e-2.\]
To ease the computation and presentation, we deal here with the particular case
\begin{equation}\label{eq:0}
1+e-f(r_2+1-e+f)=e-2.
\end{equation}
We show that if (\ref{eq:0}) is satisfied, then the intersection 
\[\Gamma_e(l_1)\cap V^{e-f}_e(l_2)\] is empty.
Condition (\ref{eq:0}) is equivalent to
\[ f(r_2+1-e+f)=3 \]
and we distinguish two possibilities:

\bigskip

\begin{enumerate}[label=\Roman*., wide, labelwidth=!, labelindent=0pt]
	\item If $f=3$, then $r_2-e+f=0$ and $V_e^{e-f}(l_2)=\Gamma_e(l_2)$.  Moreover, 
	\begin{equation}\label{eq:00}
	e=r_2+f=(d_1-3)+3=d_1.
\end{equation}	 	  
	Thus, as expected from the discussion in Section \ref{sec:interincid}, we are in a degenerate situation and we are in fact looking at the inclusion of $l_1=\grdop{1}{d_1}$ inside $l_2=K_C-l_1=\grdop{d_1-3}{3d_1-8}$.  
More precisely, suppose there exists a divisor $D\in C_e$ such that 
\[ D\in\Gamma_e(l_1)\cap \Gamma_e(l_2). \]
	Thus, from (\ref{eq:00}) we have that $|D|=l_1$ and, as we have seen in the proof of Proposition \ref{prop:zerocount}, we have that
	\[|2D + D'| = K_C\] for some effective divisor $D'$ of the correct degree.  More precisely, the condition that $D\in\Gamma_e(l_2)$ is equivalent to
\begin{equation}\label{eq:secancy}
\dim (l_2-D)=\dim (K_C-l_1-D)=\dim |D'| \geq 0.
\end{equation}		
		Since the curve is general, the Petri map
		\[ \mu_0:H^0(C,D)\otimes H^0(C,K_C-D)\rightarrow H^0(C,K_C) \]
		is injective.  Combining this with the base-point-free pencil trick, we get that
		\[ H^0(C,K_C-2D)=H^0(C,D')=0. \]
		This then yields a contradiction with condition (\ref{eq:secancy}).  Hence the intersection $\Gamma_e(l_1)\cap V_e^{e-f}(K_C-l_1)$ is empty in this case.
\begin{rem}\label{rem:interesting}
This actually provides an interesting example that contradicts the expectation of non-emptiness of secant varieties (see Theorem 0.5 in \cite{Fa2}). The inclusion of $l_1=\grdop{1}{d_1}$ in $l_2=\grdop{r_2}{d_2}=\grdop{d_1-3}{3d_1-8}$ can be reformulated from the point of view of secant varieties as follows: there should exist an effective divisor $D'\in C_{2d_1-8}$ such that $\grdop{1}{d_1}+D'=\grdop{d_1-3}{3d_1-8}$.  In other words, the secant variety $V_e^{e-f}(l_2)$, where $e=2d_1-8$ and $f=d_1-4$ should be non-empty and this is indeed the expectation from dimensional considerations as:
\[ e-(r_2+1-e+f)=0. \]
However, as we saw above, there are no such effective divisors $D'$.
\end{rem}
	\item If $f=1$, then $e=d_1-4$ and $r_2-e+f=2$.  
	Assume towards a contradiction that there exists a divisor
	\[D\in\Gamma_e(l_1)\cap V_e^{e-f}(l_2).\]
	  Hence there exists an effective divisor $E\in C_4$ such that $D+E=l_1$.
	Moreover
	\[ l_2-D=K_C-l_1-D=\grdop{r_2-e+f}{2d_1-4}=\grdop{2}{2d_1-4}. \]
	Taking the residue yields
	\[ l_1+D=\grdop{2}{2d_1-4}. \]
	We have therefore obtained a ``system of equations'' for a pair of effective divisors $(D,E)\in C_{d_1-4}\times C_4$:
	\begin{equation}\label{eq:firstsystem}
	\begin{aligned}
	|D+E|&=\grdop{1}{d_1}\\
	|2D+E|&=\grdop{2}{2d_1-4}.
	\end{aligned}
	\end{equation}

By our assumption, a solution for this system exists.
	
A short computation shows that $\rho(g,1,d_1)=1$ implies $\rho(g,2,2d_1-4)=2d_1-12$ which is non-negative if and only if $d_1\geq 6$.  Therefore, in order for the system above to make sense, we let $d_1\geq 6$ from now on.

Furthermore, let $\tilde{l}:=|2D+E|=\grdop{2}{2d_1-4}$.  Hence (\ref{eq:firstsystem}) implies that $D\in V_{d_1-4}^{1}(\tilde{l})$
and we have that
\[ \exp \dim V_{d_1-4}^{1}(\tilde{l}) = -d_1+6. \]
Thus we expect $V_{d_1-4}^{1}(\tilde{l})\neq\emptyset$ for $d_1=6$ and we know from Corollary 0.3 of \cite{Fa2} that if $d_1>6$ and $\tilde{l}$ is general, then $V_{d_1-4}^{1}(\tilde{l})=\emptyset$.  Unfortunately $\tilde{l}$ may not be assumed general in our case.  The non-existence result for secant varieties corresponding to arbitrary linear series is Theorem 0.1 of \cite{Fa2} which states that if
\[\rho(g,2,2d_1-4) +\exp\dim V_{d_1-4}^{1}(\tilde{l}) = d_1-6<0,\]
then $V_{d_1-4}^{1}(\tilde{l})=\emptyset$. Since we let $d_1\geq 6$, this does not impose any further constraints on the existence of solutions to the system (\ref{eq:firstsystem}).
	
  Using a degeneration argument with limit linear series we now show that if the pair $(D,E)$ is a solution to (\ref{eq:firstsystem}) then we have a contradiction.  The idea is to exploit a result of Farkas \cite{Fa2} to obtain a flag curve $\widetilde{R}$ such that all the $d_1$ points coming from the limit of the effective divisors $(D,E)$ specialise to a connected subcurve $Y$ of $\widetilde{R}$ having arithmetic genus at most $d_1$.  This configuration gives rise to ``too much'' ramification at the point of intersection between $Y$ and its complement in $\widetilde{R}$ and the contradiction follows.
	
\bigskip	
	
	We first fix some notation.	
	Following \cite{Fa2}, consider all ``flag curve'' degenerations of curves of genus $g$ that we describe using the ``flag map'' 
\begin{align*}
	j:\overline{\mathcal{M}}_{0,g}&\rightarrow\overline{\mathcal{M}}_g \\
	[R,q_1,\ldots,q_g]&\mapsto[R\cup_{q_1}E_1\cup_{q_2}\ldots\cup_{q_g}E_g]=:[\widetilde{R}],
\end{align*}	
which attaches to each stable curve $[R,q_1,\ldots,q_g]\in\mathcal{M}_{0,g}$ fixed elliptic tails $E_1,\ldots,E_g$ at the points $q_1,\ldots,q_g$, respectively.  Let $p_R:\widetilde{R}\rightarrow R$ be the projection onto $R$, which collapses the elliptic tails i.e.~$p_R(E_i)=q_i$ for $i=1,\ldots,g$.   Denote by $\overline{\mathscr{C}}_{g}=\overline{\mathcal{M}}_{g,1}$ the universal curve over $\overline{\mathcal{M}}_{g}$, and more generally by $\overline{\mathscr{C}}_{g,n}=\overline{\mathcal{M}}_{g,n+1}$ the universal curve over $\overline{\mathcal{M}}_{g,n}$.  Let $\pi_d:\overline{\mathscr{C}}^{d}_{g,n}\rightarrow\overline{\mathcal{M}}_{g,n}$ be the $d$-th fibre product of $\overline{\mathscr{C}}_{g,n}$ over $\overline{\mathcal{M}}_{g,n}$ for some positive integer $d$ and $\overline{\mathcal{M}}_{0,g}\times_{\overline{\mathcal{M}}_g} \overline{\mathscr{C}}^d_g$ the fibre product corresponding to the morphisms $j$ and $\pi_d$.
Finally, we define $\chi$ to be the map that collapses the elliptic tails at the level of the moduli space:
\begin{align*}
\chi: \overline{\mathcal{M}}_{0,g}\times_{\overline{\mathcal{M}}_g} \overline{\mathscr{C}}^d_g &\rightarrow\overline{\mathscr{C}}^d_{0,g}\\
([R,q_1,\ldots,q_g],(y_1,\ldots,y_d))&\mapsto([R,q_1,\ldots,q_g],p_R(y_1),\ldots,p_R(y_d)].
\end{align*}  
	
To apply this setup to our problem, let $W\subset\overline{\mathscr{C}}^{d_1}_{g}$ be the closure of the locus of divisors $(D,E)$ satisfying (\ref{eq:firstsystem}), i.e~the closure of the locus
\[\{ [C,y_1,\ldots,y_{d_1}]\in \mathscr{C}^{d_1}_{g} \mid \exists \grdop{1}{d_1},\,\grdop{2}{2d_1-4}\text{ with }\Bigl|\sum_{i=1}^{d_1} y_i\Bigr|=\grdop{1}{d_1} \text{ and }\Bigl|\grdop{2}{2d_1-4}-\sum_{j=1}^e y_{i_j}\Bigr| = \grdop{1}{d_1} \}.\]
We have assumed that for a general $[C]\in\mathcal{M}_g$ the above locus is non-empty.  This implies that $\pi_{d_1}(W)=\overline{\mathcal{M}}_g$.  Let $U:=\chi(\overline{\mathcal{M}}_{0,g}\times_{\overline{\mathcal{M}}_g}W)\subset\overline{\mathscr{C}}^{d_1}_{0,g}$.  We get therefore that $\pi_{d_1}(U)=\overline{\mathcal{M}}_{0,g}$ and that the minimal fibre dimension of the map $\pi_{d_1}|_U:U\rightarrow\overline{\mathcal{M}}_{0,g}$ is $d_1-m$ for some $0\leq m\leq d_1$.  Hence $\dim(U\cap\pi^{-1}_{d_1}([R,q_1,\ldots,q_g]))\geq d_1-m$ for every point $[R,q_1,\ldots,q_g]\in\overline{\mathcal{M}}_{0,g}$ with equality for a general point $[R,q_1,\ldots,q_g]\in\overline{\mathcal{M}}_{0,g}$.  

	 We now apply Proposition 2.2 of \cite{Fa2}:  let $U'\subset\overline{\mathscr{C}}^{d_1}_{0,g}$ be an irreducible component of the closure of the locus of limits of the divisors $(D,E)$ on flag curves from $\overline{\mathcal{M}}_g$.  Assuming that $\dim U'=g-3+d_1-m$ with $0\leq m\leq d_1$, there exists a point $([R,q_1,\ldots,q_g],(y_1,\ldots,y_d))$ inside $\overline{\mathcal{M}}_{0,g}\times_{\overline{\mathcal{M}}_g}W$ corresponding to a flag curve
	 \[\widetilde{R}:=R\cup E_1\cup\ldots\cup E_g \text{ and points } y_1,\ldots,y_{d_1}\in\widetilde{R}\]
	  such that either:
\begin{enumerate}[label=(\roman*), wide, labelwidth=!, labelindent=0pt]
	\item $y_1=\ldots=y_{d_1}\in R\setminus\{q_1,\ldots,q_g\}$, or else
	\item $y_1,\ldots,y_{d_1}$ lie on a connected subcurve $Y$ of $\widetilde{R}$ of arithmetic genus $p_a(Y)\leq\min(m,g)$ and $|Y\cap\overline{(\widetilde{R}\setminus Y)}|=1$.  
\end{enumerate}

To summarise, if we assume that for a general $[C]\in\mathcal{M}_g$ the system (\ref{eq:firstsystem}) has a solution, then the same should be true for a flag curve $\widetilde{R}$ equipped with limit linear series $\grdop{1}{d_1}$ and $\grdop{2}{2d_1 - 4}$ and such that the points in the support of the limit of the divisors $(D,E)$ satisfy (i) or (ii) above.  

\bigskip

We now obtain the sought after contradiction. 
Case (i) is immediately dismissed via a short computation using the Plücker formula. 

We focus on case (ii).  
Since $g=2d_1 - 3$, it means that $g>d_1\geq m$ for $d\geq 2$, hence without loss of generality we say that the points $y_1,\ldots,y_{d_1}$ lie on a connected subcurve $Y$ with $p_a(Y)=d_1$.  Let $p=Y\cap\overline{(\widetilde{R}\setminus Y)}$ and let $Z:=\overline{\widetilde{R}\setminus Y}$ and let $R_Y$, $R_Z$ denote the rational spines corresponding to $Y$ and $Z$, respectively.

By assumption, $[\widetilde{R},y_1,\ldots,y_{d_1}]\in W$, so there exists a flat, proper morphism $\pi:\mathscr{X}\rightarrow B$ such that:
\begin{itemize}
	\item $\mathscr{X}$ is a smooth surface and $B$ is the spectrum of a discrete valuation ring with uniformising parameter $t$.  
	Moreover, in the notation of Section \ref{sec:degenerationssecant}, the special fibre $\mathscr{X}_0=\pi^{-1}(0)$ is a curve stably equivalent to $\widetilde{R}$ while the general fibre $\mathscr{X}_{\eta}$ is a smooth projective curve of genus $g$.  Finally, there are $d_1$ sections $\sigma_i:B\rightarrow\mathscr{X}$ such that the $\sigma_i(0)=y_i$ are smooth points of $\mathscr{X}_0$ for all $1\leq i \leq d_1$.  Without loss of generality, let $\sigma_1,\ldots,\sigma_e$ be the sections corresponding to the divisor $D$.
	\item $\mathscr{X}_{\eta}$ is equipped with a series $(\mathscr{L}_{\eta},\mathscr{V}_{\eta})$ of type $\grdop{2}{2d_1-4}$.  Furthermore,
\[ \dim\mathscr{V}_{\eta}\cap H^0\Bigl( \mathscr{X}_{\eta},\mathscr{L}_{\eta}\Bigl(-\sum_{j=1}^e\sigma_j(\eta)\Bigr) \Bigr)=2. \]	
\end{itemize}
    
As explained in Section \ref{sec:degenerationssecant}, after possibly making a base change and resolving any resulting singularities, the pair $(\mathscr{L}_{\eta},\mathscr{V}_{\eta})$ induces a refined limit linear series of type $\grdop{2}{2d_1-4}$ on $\widetilde{R}$, which we denote by $\tilde{l}$.  Moreover, the vector bundle 
\[\mathscr{V}_{\eta}\cap\pi_*\Bigl(\mathscr{L}_{\eta}\otimes \oo_{\mathscr{X}_{\eta}}\Bigl(-\sum_{j=1}^e \sigma_j(\eta)\Bigr)\Bigr)\] induces a refined limit linear series $l_1=\grdop{1}{d_1}$ on $\mathscr{X}_0$.  

For a component $X$ of $\mathscr{X}_0$, denote by $(\mathscr{L}_X,\mathscr{V}_X)\in G^{2}_{2d_1-4}(X)$ the $X$-aspect of $\tilde{l}$.  There exists therefore a unique effective divisor $D_X$ of degree $e$ supported only at the points of $(X\cap \bigcup_{j=1}^e \sigma_j(B))\cup (X\cap\overline{\mathscr{X}_0\setminus X})$ such that the $X$-aspect of $l_1$ is of the form
\[ l_{1,X} = (\mathscr{L}_X\otimes\oo_X(-D_X), W_X\subset\mathscr{V}_X\cap H^0(X,\mathscr{L}_X\otimes\oo_X(-D_X)) )\in G^1_{d_1}(X). \]
The collection of aspects $\{ l_{1,X} \}_{X\subset Y}$ forms a refined limit $(l_1)_Y$ of type $\grdop{1}{d_1}$ on $Y$ with a vanishing sequence that is a subsequence of the vanishing sequence of $\tilde{l}$.  We call $\tilde{l}_Y$ the limit linear series induced by $\tilde{l}$ on $Y$.
Moreover, the collection of aspects of $l_1$ on $Z$ also yield a refined limit linear $\grdop{1}{d_1}$ on $Z$ whose vanishing sequence at $p$ is a subsequence of the one of $\tilde{l}$.
 Furthermore, we obtain refined limits of $l_1$ and $\tilde{l}$ on both $R_Y$ and $R_Z$ which we call $(l_1)_{R_Y}$, $(l_1)_{R_Z}$ (of type $\grdop{1}{d_1}$) and $\tilde{l}_{R_Y}$, $\tilde{l}_{R_Z}$ (of type $\grdop{2}{2d_1-4}$), respectively.
 
To reach the desired contradiction, we obtain various bounds for the ramification sequences of the series $l_1$ and $\tilde{l}$ and show that they cannot be simultaneously satisfied.

Note that the points of attachment $q_1,\ldots,q_g$ of the elliptic tails to the rational spine are all cusps, hence for $j=1,\ldots,g$, 
\begin{align}
&\alpha((l_1)_{R_Y},q_j) \geq (0,1) \text{ and } \alpha((l_1)_{R_Z},q_j) \geq (0,1), \label{eq:01}\\
&\alpha(\tilde{l}_{R_Y},q_j) \geq (0,1,1) \text{ and } \alpha(\tilde{l}_{R_Z},q_j) \geq (0,1,1).\label{eq:02}
\end{align}
Moreover, using the Plücker formula (\ref{thm:pluckerlls}) on $R_Y$ we have
\begin{align}
\text{for }l_1=\grdop{1}{d_1}: &\sum_{q\text{ smooth point}}\bigl(\alpha_0((l_1)_{R_Y},q)+\alpha_1((l_1)_{R_Y},q) \bigr) = 2d_1-2, \label{eq:03}\\
\text{for }\tilde{l}=\grdop{2}{2d_1-4}: &\sum_{q\text{ smooth point}}\bigl(\alpha_0(\tilde{l}_{R_Y},q)+\alpha_1(\tilde{l}_{R_Y},q) \bigr) = 6d_1-18. \label{eq:04}
\end{align}
Combining (\ref{eq:01}), (\ref{eq:03}), and (\ref{eq:04}) we obtain that on $R_Y$ the ramification at $p$ is at most
\begin{align}
\text{for }l_1 &: \alpha_0((l_1)_{R_Y},p) + \alpha_1((l_1)_{R_Y},p) \leq d_1-2, \\
\text{for }\tilde{l} &: \sum_{i=0}^2 \alpha_i(\tilde{l}_{R_Y},p) \leq 4d_1-18,
\end{align}
while on $R_Z$ we have the upper bounds
\begin{align}
\text{for }l_1 &: \alpha_0((l_1)_{R_Z},p) + \alpha_1((l_0){R_Z},p) \leq d_1+1, \label{eq:05}\\
\text{for }\tilde{l} &: \sum_{i=0}^2 \alpha_i(\tilde{l}_{R_Z},p) \leq 4d_1-12.
\end{align}
A further constraint for the ramification sequence at $p$ is given by the following vanishing conditions: 

\begin{itemize}
	\item If $\{\sigma_C \mid C\subseteq Y \text{ irreducible component}\}$ is the set of compatible sections corresponding to the divisor $D+E$ and if $p\in C$, then $\ord_p(\sigma_C)=0$.
	\item Similarly, the compatible sections $\{\sigma_C \mid C\subseteq Y \text{ irreducible component}\}$ corresponding to the divisor $2D+E$ also have the property that, if  $p\in C$, then $\ord_p(\sigma_C)=0$. 
\end{itemize}

The important observation in both cases is that the supports of $D+E$ and of $2D+E$ are contained in $Y$ and that $\deg(D+E)=d_1=\deg l_1$ and $\deg(2D+E)=2d_1-4=\deg\tilde{l}$. For a detailed proof, see Lemma 5.2 of \cite{Un}.  Concretely, this means that the vanishing sequences at $p$ of both $(l_1)_{Y}$ and $\tilde{l}_{Y}$ must have 0 as their first entry and consequently  also those of both $(l_1)_{R_Y}$ and $\tilde{l}_{R_Y}$.

Combining this with the compatibility conditions for the vanishing of the sections (\ref{eq:vanish}) and the fact that vanishing sequence at $p$ of $l_1$ is a subsequence of the one of $\tilde{l}$ we see that the only possibility for the vanishing sequences at $p$
of $l_1$ is
\[a((l_1)_{R_Y},p)=(0,d_1-4)  \text{ and } a((l_1)_{R_Z},p)=(4,d_1)\]
and for the vanishing sequence of $\tilde{l}$ at $p$ is
\[a(\tilde{l}_{R_Y},p)=(0,d_1-4,2d_1-8)  \text{ and } a(\tilde{l}_{R_Z},p)=(4,d_1,2d_1-4).\]
However the ramification sequence corresponding to the vanishing sequence \[a((l_1)_{R_Z},p)=(4,d_1)\] is \[\alpha((l_1)_{R_Z},p)=(4,d_1-1),\] which certainly breaks the upper bound in (\ref{eq:05}) and we have obtained the desired contradiction.
\end{enumerate}

\subsection{Proof of Theorem \ref{thm:secant}}\label{sec:thmsecant}
This section is dedicated to proving Theorem \ref{thm:secant}, which states that for any linear series $l_1=\grdop{r_1}{d_1}$ on a general curve $C$ there are no divisors $D\in C_e$ in the intersection
\[ \Gamma_e(l_1)\cap V^{e-f}_e(K_C-l_1) \]
whenever $f=1$, and
\begin{equation}\label{eq:nonexcondition}
\dim\Gamma_e(l_1) + \exp\dim V_e^{e-f}(l_2) \leq e-\rho(g,r_1,d_1)-1.
\end{equation}

In fact we give a general method to check this non-existence statement and apply it to the case $f=1$ where the computations are most tractable.

For the linear series $l_1=\grdop{r_1}{d_1}$ on a general curve $C$ of genus $g$, set \[\rho:=\rho(g,r_1,d_1)\geq 0.\]  Then we have an expression of the genus $g$ in terms of $\rho$:
\begin{equation}\label{eq:genusformula}
g=\frac{(r_1+1)d_1-\rho}{r_1} - r_1 - 1.
\end{equation}
Moreover, an easy computation shows that the residual linear series to $l_1$ is $l_2=\grdop{r_2}{d_2}$ where
\begin{align}
r_2&=\frac{d_1-\rho}{r_1}-2 \label{eq:residualr} \\
d_2&=\frac{(r_1+2)d_1-2\rho}{r_1}-2r_1-4. \label{eq:residuald}
\end{align}
The non-existence condition (\ref{eq:nonexcondition}) becomes
\[ r_1 + e -f(r_2+1-e+f)\leq e-1-\rho, \]
or equivalently
\begin{equation}\label{eq:incidnonex}
f(r_2+1-e+f)\geq r_1+1+\rho,
\end{equation}  
where we used $\dim\Gamma_e(l_1)=r_1$ and $\exp\dim V_e^{e-f}(l_2)=e-f(r_2+1-e+f)$.

Assume towards a contradiction that there exists a divisor $D\in C_e$ such that
\[D\in\Gamma_e(l_1)\cap V_e^{e-f}(l_2).\]
It follows that we also have a divisor $E=l_1-D\in C_{d_1-e}$.  Then 
\[l_2-D=K_C-l_1-D\] is a linear series of dimension
$r_2-e+f$
and degree
\[ \frac{(r_1+2)d_1-2\rho}{r_1}-2r_1-4-e. \]
By residuation we conclude that
\begin{equation}\label{eq:l1plusd}
l_1+D=\grdop{r_1+f}{d+e}.
\end{equation}
We have therefore obtained a ``system of equations'' for two divisors $(D,E)\in C_e \times C_{d_1-e}$:
\begin{equation}\label{eq:system}
\begin{aligned}
|D+E|&=\grdop{r_1}{d_1},\\
|2D+E|&=\grdop{r_1+f}{d_1+e},
\end{aligned}
\end{equation}
and by assumption a solution should exist.  We impose further that the Brill-Noether number of $\tilde{l}:=|2D+E|=\grdop{r_1+f}{d_1+e}$ is also non-negative.

We may view the condition $|2D+E|=\grdop{r_1+f}{d_1+e}$ also from the point of view of de Jonqui\`eres divisors: the dimension of the space of pairs $(D,E)$ satisfying this is
\[ d_1-(d_1+e)+(r_1+f) = r_1-e+f\geq 0, \]
hence no contradiction is detected.  On the other hand, we see that $D\in V_e^{f}(\tilde{l})$ which has
\[ \exp\dim V_e^{f}(\tilde{l}) = f(r_1+1)-r_1 e.\]
Just like in the minimal pencil case, the non-existence condition 
\[\rho(g,r_1+f,d_1+e) + \exp\dim V_e^{f}(\tilde{l}) < 0\]
of \cite{Fa2} for secant varieties corresponding to arbitrary linear series does not impose further restrictions.

We may therefore still assume that there exists a pair of divisors $(D,E)\in C_e \times C_{d_1-e}$ satisfying the system (\ref{eq:system}) and we now produce a contradiction.  Assume furthermore that $g>d_1$ (we shall see later that in the case $f=1$ this assumption does not lead to any loss of generality). We consider again all flag curve degenerations as in the case of minimal pencils and let $W$ be the closure in $\overline{\mathscr{C}}_g^{d_1}$ of the locus
\[\{ [C,y_1,\ldots,y_{d_1}]\in \mathscr{C}^{d_1}_{g} \mid \exists \grdop{r_1}{d_1},\,\grdop{r_1+f}{d_1+e}\text{ with }\Bigl|\sum_{i=1}^{d_1} y_i\Bigr|=\grdop{r_1}{d_1} \text{ and }\Bigl|\grdop{r_1+f}{d_1+e}-\sum_{j=1}^e y_{i_j}\Bigr| = \grdop{r_1}{d_1} \}.\]
Applying Proposition 2.2 of \cite{Fa2}, there exists a point $[\widetilde{R}:=R\cup E_1\cup\ldots\cup E_g,y_1,\ldots,y_{d_1}]\in W$, where $R$ is a rational spine (not necessarily smooth) and the $E_i$ are elliptic tails such that either:
\begin{enumerate}[label=(\roman*), wide, labelwidth=!, labelindent=0pt]
	\item $y_1=\ldots=y_{d_1}$, or else
	\item $y_1,\ldots,y_{d_1}$ lie on a connected subcurve $Y$ of $\widetilde{R}$ of arithmetic genus $p_a(Y)=d_1$ and $|Y\cap\overline{(\widetilde{R}\setminus Y)}|=1$.  This is possible since we have taken $g> d_1$.
\end{enumerate}

Case (i) immediately leads to a contradiction via a short computation using the Plücker formula. 

We focus on case (ii).
Let $p=Y\cap\overline{(\widetilde{R}\setminus Y)}$ and let $Z:=\overline{\widetilde{R}\setminus Y}$ and let $R_Y$, $R_Z$ denote the rational spines corresponding to $Y$ and $Z$, respectively.
Just as in the case of minimal pencils, we have refined limit linear series $l_1$ of type $\grdop{r_1}{d_1}$ and $\tilde{l}$ of type $\grdop{r_1+f}{d_1+e}$ on $\widetilde{R}$ and hence on both $R_Y$ and $R_Z$.

The strategy again is to constrain the vanishing (or, equivalently, ramification) sequence at $p$ of the limit linear series $l_1$ and $\tilde{l}$ on each of the components $R_Y$ and $R_Z$.  In the same notation as for minimal pencils, we make use of four important facts:
\begin{enumerate}[wide, labelwidth=!, labelindent=0pt]
	\item For refined limit linear series, the vanishing sequences at the point $p$ must satisfy the following equalities:
	\begin{equation}\label{eq:vanishingcond}
	\begin{aligned}
	&a_i((l_1)_{R_Y},p) + a_{r_1-i}((l_1)_{R_Z},p)=d_1\text{ for }i=0,\ldots,r_1,\\
	&a_i(\tilde{l}_{R_Y},p) + a_{r_1+f-i}(\tilde{l}_{R_Z},p)=d_1+e\text{ for }i=0,\ldots,r_1 + f.
	\end{aligned}
	\end{equation}	
	\item The vanishing sequence at $p$ of $l_1=\grdop{r_1}{d_1}$ is a subsequence of the one corresponding to $\tilde{l}=\grdop{r_1+f}{d_1+e}$.
	\item The Plücker formula (\ref{thm:pluckerlls}) applied to both limit linear series on both components.
The Plücker formula on $R_Y$ yields:
\begin{align}
\text{for }l_1:  &\sum_{q\text{ smooth point of }R_Y}\biggl(\sum_{i=0}^{r_1} \alpha_i((l_1)_{R_Y},q) \biggr)=(r_1+1)(d_1-r_1)\label{eq:1} \\
\text{for }\tilde{l}: &\sum_{q\text{ smooth point of }R_Y}\biggl(\sum_{i=0}^{r_1+f} \alpha_i(\tilde{l}_{R_Y},q) \biggr)=(r_1+f+1)(d_1+e-f-r_1).\label{eq:2}
\end{align}
The curve $R_Y$ contains the points $q_1,\ldots,q_{d_1}$ which are all cusps, and therefore have ramification sequences at least $(0,1,\ldots,1)$.  Combining this with (\ref{eq:1}) and (\ref{eq:2}) we obtain upper bounds for the ramification at $p$:
\begin{align}
\text{for }l_1: &\sum_{i=0}^{r_1}\alpha_i((l_1)_{R_Y},p)\leq (r_1+1)(d-r_1)-d_1 r_1 \label{eq:3} \\
\text{for }\tilde{l}: &\sum_{i=0}^{r_1+f}\alpha_i(\tilde{l}_{R_Y},p)\leq (r_1+f+1)(d_1+e-f-r_1)-(f+r_1)d_1. \label{eq:4}
\end{align}
Using the same reasoning on $R_Z$ we obtain the following bounds on the ramification at $p$:
\begin{align}
\text{for }l_1: &\sum_{i=0}^{r_1}\alpha_i((l_1)_{R_Z},p)\leq (r_1+1)(d-r_1)-(g-d_1) r_1\label{eq:5} \\
\text{for }\tilde{l}: &\sum_{i=0}^{r_1+f}\alpha_i(\tilde{l}_{R_Z},p)\leq (r_1+f+1)(d_1+e-f-r_1)-(f+r_1)(g-d_1). \label{eq:6}
\end{align}

Since for a linear series $l$ of type $\grd$, 
\begin{equation}
\sum_{i=0}^r \alpha_i(l,p) =  \sum_{i=0}^r a_i(l,p) - \frac{r(r+1)}{2}, \label{eq:7}
\end{equation}
the upper bounds for the ramification give equivalently bounds for the vanishing at $p$. 

\item The statement of Lemma 5.2 of \cite{Un} applied to the current situation, as in the case of the minimal pencils.  We again obtain that both the vanishing sequence of $(l_1)_{R_Y}$ and that of $\tilde{l}_{R_Y}$ must have 0 as their first entry.
\end{enumerate}

Putting everything together, the vanishing sequence at $p$ corresponding to $l_1$ on $R_Y$  is
\[ a((l_1)_{R_Y},p) = (0,x_1,\ldots,x_{r_1}), \]
for some strictly positive integers $x_1,\ldots,x_{r_1}$ smaller than $d_1$, while the sequence on $R_Z$ is
\[ a((l_1)_{R_Z},p) = (d_1-x_{r_1},\ldots,d_1-x_1,d_1). \]
On the other hand, the vanishing sequence at $p$ corresponding to $\tilde{l}$ on $R_Y$  is
\[ a(\tilde{l}_{R_Y},p) = (0,y_1,\ldots,y_{r_1+f}), \]
where the strictly positive integers $y_i$, with $i=1,\ldots,r_1 +f$, are all smaller than $d_1+e$.  Moreover, since the vanishing subsequences of $l_1$ are subsequences of those of $\tilde{l}$, then exactly one of the $y_i$ is equal to $e$ and for each index $i=1,\ldots,r_1$ there exists an index $j$ such that $x_i=y_j$.  Finally, the vanishing sequence at $p$ of $\tilde{l}$ on $R_Z$ is
\[ a(\tilde{l}_{R_Z},p) = (d_1+e-y_{r_1+f},\ldots,d_1,\ldots,d_1+e), \]
which must also contain the terms $d_1-x_{r_1},\ldots,d_1-x_1$.

Let $x=x_1 + \ldots + x_{r_1}$.  Using (\ref{eq:3}), (\ref{eq:5}), and (\ref{eq:7}) and the fact that
\begin{equation}\label{eq:genusineq}
g-d_1 = \frac{d_1-\rho}{r_1}-r_1-1,
\end{equation}
we have that
\begin{equation}\label{eq:8}
r_1\left( \frac{d_1}{r_1} - \frac{r_1+1}{2} \right) - \rho \leq x \leq r_1\left( \frac{d_1}{r_1} - \frac{r_1+1}{2} \right).
\end{equation}

\bigskip

In order to prove the non-existence statement, one finds a contradiction to the inequality (\ref{eq:8}).  As mentioned before, we restrict ourselves to the case $f=1$.

We now have $\tilde{l}=\grdop{r_1+1}{d_1+e}$ and  (\ref{eq:incidnonex}) yields
\begin{equation}\label{eq:expressionfore}
e\leq r_2-r_1-\rho+1=\frac{d_1-(r_1+1)\rho}{r_1}-r_1-1.
\end{equation} 

Moreover, from (\ref{eq:genusineq}) and (\ref{eq:expressionfore}) we get that if $f=1$, then $0<e\leq g-d_1$.  Hence the assumption $g>d_1$ needed in general in order to have a proper subcurve $Y\subsetneq\widetilde{R}$ with $p_a(Y)=d_1<g$ is superfluous in this case.

Suppose first that none of the $x_i$ with $i=1,\ldots,r_1$ is equal to $e$.  Thus the vanishing sequence at $p$ corresponding to $\tilde{l}$ on $R_Y$ is
\[ a(\tilde{l}_{R_Y},p) = (0,e,x_1,\ldots,x_{r_1}),\]
up to a permutation of the terms $e,x_1,\ldots,x_{r_1}$.  We note that the exact order of the terms in the vanishing sequence does not matter in the arguments below, as we are always considering the sums of their entries.  
Combining (\ref{eq:6}) and (\ref{eq:7}) yields the inequality
\begin{align*}
(r_1+2)(d_1+e)-e-x-\frac{(r_1+1)(r_1+2)}{2}\leq &(r_1+2)(d_1+e-1-r_1)\\
&-(r_1+1)\left( \frac{d_1-\rho}{r_1}-r_1-1 \right)
\end{align*}  
which, after plugging in the expression (\ref{eq:expressionfore}) for $e$, reduces to
\[ x\geq \frac{(r_1+1)(r_1+2)}{2} + (r_1+1)\left( \frac{d_1}{r_1}-r_1-1 \right). \]
This contradicts the upper bound in (\ref{eq:8}).  Hence this vanishing sequence cannot occur.

One the other hand, if $e$ is one of the $x_i$ with $i=1,\ldots,r_1$, then, abusing notation as before, the vanishing sequence 
at $p$ corresponding to $l_1$ on $R_Y$ is
\[ a((l_1)_{R_Y},p) = (0,e,x_1,\ldots,x_{r_1-1}) \]
and on $R_Z$
\begin{equation}\label{eq:9}
a((l_1)_{R_Z},p) = (d_1-x_{r_1-1},\ldots,d_1-x_1,d_1-e,d_1). 
\end{equation} 
Moreover, the vanishing sequence at $p$ corresponding to $\tilde{l}$ on $R_Y$ is
\[ (0,e,x_1,\ldots,x_{r_1-1},y), \]
for some positive integer $y$, and the one on $R_Z$ is
\begin{equation}\label{eq:10}
(d_1+e-y,d_1+e-x_{r_1-1},\ldots,d_1 +e -x_1,d_1,d_1+e).
\end{equation}  
Since the sequence (\ref{eq:9}) must be a subsequence of (\ref{eq:10}), we see that
\[d_1+e-y=d_1-x_i,\] for some index $i$.  In other words, $y=e+x_i$. 
Combining (\ref{eq:6}) and (\ref{eq:7}) again yields the inequality
\begin{align*}
(r_1+2)(d_1+e)-e-x_i-x-\frac{(r_1+1)(r_1+2)}{2}&\leq (r_1+2)(d_1+e-1-r_1)\\
&-(r_1+1)\left( \frac{d_1-\rho}{r_1}-r_1-1 \right).
\end{align*}  
This leads to a contradiction with the upper bound in (\ref{eq:8}) in the same way as above.
 

 \end{document}